\documentclass{amsart}
\usepackage{amsthm,amsfonts,amsmath,amscd,amssymb,latexsym,epsfig,epic}
%\usepackage[all]{xy}
%\input xy
%\xyoption{all}
%\xyoption{matrix}
\newcommand{\gl}{{\mathfrak g \mathfrak l}}

         % Lie algebra of G
         % Lie algebra of H
         % Lie algebra of H
      % Lie algebra of T

\newcommand{\cx}{{\mathbb C}}
\newcommand{\diag}{\operatorname{diag}}

\newcommand{\Res}{\operatorname{Res}}

\newcommand{\Hom}{\operatorname{Hom}}

\newcommand{\Ker}{\operatorname{Ker}}
\newcommand{\Coker}{\operatorname{Coker}}

\newcommand{\Jac}{\operatorname{Jac}}
\newcommand{\ord}{\operatorname{ord}}
\newcommand{\Tot}{\operatorname{Tot}}

\numberwithin{equation}{section}

\newtheorem{theorem}{Theorem}[section]

\newtheorem{lemma}[theorem]{Lemma}

\newtheorem{corollary}[theorem]{Corollary}

\newtheorem{proposition}[theorem]{Proposition}

\theoremstyle{remark}

\newtheorem{remark}[theorem]{Remark}

\newtheorem{definition}[theorem]{Definition}

\newtheorem{example}[theorem]{Example}

\newcommand{\oC}{{\mathbb{C}}}

\newcommand{\oP}{{\mathbb{P}}}

\newcommand{\oR}{{\mathbb{R}}}

\newcommand{\oT}{{\mathbb{T}}}

\newcommand{\oZ}{{\mathbb{Z}}}

   % connections

   % configuration space
\newcommand{\sD}{{\mathcal{D}}}

\newcommand{\sG}{{\mathcal{G}}}   % gauge transformations

   % Lagrangian planes
   % moduli space

\newcommand{\sO}{{\mathcal{O}}}

\newcommand{\sS}{{\mathcal{S}}}

\newcommand{\fH}{{\mathfrak{h}}}

\begin{document}

\title[Line bundles on  spectral curves and the GLT]{Line bundles on spectral curves and the generalised Legendre transform construction of hyperk\"ahler metrics}
\author{Roger Bielawski}
\address{School of Mathematics\\
University of Leeds\\Leeds LS2 9JT\\ UK}
%\email{R.Bielawski@ed.ac.uk}
%\date{\today}

%\thanks{}
%\dedicatory{\bf Preliminary version}

\subjclass[2000]{53C26, 53C28, 14H70}

\begin{abstract} An analogue of the correspondence between $GL(k)$-conjugacy classes of matricial polynomials and line bundles is given for $K$-conjugacy classes, where $K\subset GL(k)$ is one of the following: maximal parabolic, maximal torus, $GL(k-1)$ embedded diagonally. The generalised Legendre transform construction of hyperk\"ahler metrics is studied further, showing that many known hyperk\"ahler metrics (including the ones on coadjoint orbits) arise in this way, and giving a large class of new (pseudo-)hyperk\"ahler metrics, analogous to monopole metrics.
\end{abstract}

\maketitle
\thispagestyle{empty}
\section{Introduction}

A fundamental result in the theory of integrable systems is the correspondence between matrix Lax equations with a spectral parameter and linear flows on the affine Jacobian of a spectral curve. In its simplest version, this result can be stated as in \cite{Beau}: there is a 1-1 correspondence between the affine Jacobian $\Jac S-\Theta$ of a smooth compact curve $S\in  |\sO(kd)|$ of degree $k$ and $GL(k,\cx)$-conjugacy classes of $\gl(k,\cx)$-valued polynomials $A(\zeta)=\sum_{i=0}^d A_i\zeta^i$, the spectrum of which is $S$.\par
Thus, up to conjugation, a matricial polynomial can be recovered from an algebro-geometric data associated to its spectrum. Our first aim in the present work is to recover more of the matricial polynomial than just ``up to conjugation". We show in \S3 how one can recover conjugacy classes of $A(\zeta)$ with respect to proper subgroups of $GL(k,\cx)$, particularly with respect to a maximal parabolic $P$ and a maximal torus $T$. The conjugacy classes with respect to $P$ correspond to divisors on $S$, rather than to line bundles, while the conjugacy classes with respect to $T$ correspond to a  sequence of curves $S_1,S_2,\dots, S_k=S$ with $S_i\in |\sO(id)|$ satisfying additional conditions (see Theorem \ref{O(2)}). The curves $S_i$ are given by the {\em Gelfand-Zeitlin map} \cite{GKL,KW,Bie-Pidst} with a spectral parameter.
\par
The $d=2$ case of the above setting is closely related to hyperk\"ahler geometry and, in particular, to (pseudo-)hyperk\"ahler metrics which can be obtained by means of the generalised Legendre transform (GLT) of Lindstr\"om and Ro\v{c}ek \cite{LR} (see sections \ref{hk} and \ref{glt} for a review of hyperk\"ahler metrics and of the GLT). This class includes all toric hyperk\"ahler manifolds \cite{BD}, the natural metrics on the moduli spaces of $SU(2)$-monopoles \cite{IR,Hough}, and the gravitational instantons of types $A_k$ \cite{DM} and $D_k$ \cite{CRW, CK, CH}. 
\par
 From one point of view \cite{TQ, DM}, the metrics constructed via GLT are those with a generalised symmetry of maximal rank, i.e. the twistor projection $Z^{2n+1}\rightarrow \oP^1$ factorises via a vector bundle $Z^{2n+1}\rightarrow E\rightarrow \oP^1$ of rank $n$, with the fibres of  $Z^{2n+1}\rightarrow E$ being Lagrangian for the twisted symplectic form of $Z$. The bundle $E$ splits as $\bigoplus_{j=1}^n \sO(2r_j)$ and $r_j=1$ yields a genuine symmetry of the hyperk\"ahler structure. The hyperk\"ahler structure is then recovered from a single function $F$  on the space $V$ of (real) sections of $E$. The hyperk\"ahler manifold $M$ is  a torus (or another abelian group) bundle over a submanifold $X\subset V$, with the dimension of the torus equal to $\#\{j;r_j=1\}$. The submanifold $X$ is the image of the generalised moment map on $M$.
The function $F$ can be obtained as a contour integral  of a holomorphic function $G$ of $n+1$ variables (or a sum of such).
The functions $G$ and $F$ are often found by ad hoc methods, depending on the example, and it is one of the aims of this paper to present a formal and systematic derivation of $F$ for a class of hyperk\"ahler manifolds obtained via GLT.
\par  
 The holomorphic function $G$ can be singular or multi-valued, and in the present paper we consider the case of GLT, where the function $G$ arises from an element of $H^1(D,\sO)$ for some branched covering $D$ of $E=\bigoplus_{j=1}^n \sO(2r_j)$. For example, when $E=\sO(2)\oplus \sO(4)$ we can consider a $2$-fold covering: $$D_1=\{(\eta,\alpha_1,\alpha_2)\in \sO(2)\oplus \sO(2)\oplus \sO(4);\, \eta^2+\alpha_1\eta+\alpha_2=0\},$$ or a $3$-fold one: $$D_2=\{(\eta,\alpha_1,\alpha_2)\in \sO(2)\oplus \sO(2)\oplus \sO(4);\, (\eta+\alpha_1)(\eta^2+\alpha_2)=0\}.$$ The space $V$ should be now viewed as a space of spectral curves - all compact curves in $|\sO(4)|$ for $D_1$, and a subset of the set of reducible compact curves in $|\sO(6)|$ for $D_2$.  Our first observation is that, if $D$ is chosen so that $V$ corresponds to {\em all} compact reducible curves $S$  of the form $S=S_1\cup\dots\cup S_k$,  with components $S_i\in |\sO(2m_i)|$, $i=1,\dots,k$, then the submanifold $X$ of $V$ (the image of the generalised moment map on $M$) corresponds to curves $S$ on which a certain  line bundle is trivial (see \S\ref{glt_spec}). We  then restrict ourselves further, to hyperk\"ahler metrics, the twistor space of which can be trivialised using spectral curves and  sections of line bundles. This is the case for $SU(2)$-monopole metrics and for ($A_k$- and $D_k$-) gravitational instantons, and we show that any such hyperk\"ahler manifold can be constructed via GLT. Moreover, we compute explicitly the function $F$ for such a manifold. The examples include $SU(N)$-monopole metrics, asymptotic monopole metrics considered in \cite{clusters}, and, somewhat surprisingly, hyperk\"ahler metrics on regular adjoint orbits of $GL(k,\cx)$. In this last example, the work done in \S3 (particularly Theorem \ref{O(2)}) plays a crucial role.
\par
Conversely, given a space $V$ of spectral curves and a line bundle $K$ on $T\oP^1$ with $c_1(K)=0$, we write down a function $F:V\rightarrow \oR$, the GLT of which produces a pseudo-hyperk\"ahler metric with twistor space trivialised using spectral curves and  sections of $K$. This gives a huge family of hyperk\"ahler metrics, analogous to monopole metrics. We observe, for example, that there is a ``master metric" from which all (pseudo-)hyperk\"ahler metrics on regular adjoint orbits of $GL(k,\cx)$ can be obtained as twistor quotients. Moreover, just as monopole metrics correspond to Nahm's equations, these metrics correspond to other integrable systems, defined by ODEs on triples of matrices. Essentially, $K$ can be determined by a harmonic polynomial on $\oR^3$, with Nahm's equations corresponding to a quadratic polynomial. In the last section, we consider briefly the matrix-valued ODEs corresponding to a cubic harmonic polynomial.

\section{Line bundles and matricial polynomials}

We give here (mostly following \cite{Theta}) a brief summary of spectral curves and line bundles, including Beauville's theorem. 

In what follows, $\oT$ denotes the total space of the line bundle $\sO(d)$ on $\oP^1$, $\pi:\oT\rightarrow \oP^1$ is the
projection, $\zeta$ is the affine coordinate on $\oP^1$ and $\eta$ is the fibre coordinate on $\oT$. In other words $\oT$ is obtained by gluing
two copies of $\oC^2$ with coordinates $(\zeta,\eta)$ and $(\tilde{\zeta},\tilde{\eta})$ via:
$$ \tilde{\zeta}=\zeta^{-1}, \quad \tilde{\eta}=\eta/\zeta^d.$$
We denote the corresponding two open subsets of $\oT$ by $U_0$ and $U_\infty$.

Let $S$ be a compact algebraic curve in the linear system $\sO(dk)$, i.e. over $\zeta\neq \infty$ $S$ is defined by the equation
\begin{equation} P(\zeta,\eta)= \eta^k+a_1(\zeta)\eta^{k-1}+\cdots +a_{k-1}(\zeta)\eta+ a_k(\zeta)=0,\label{S}\end{equation}
where  $a_i(\zeta)$ is a polynomial of degree $di$. $S$ can be singular or non-reduced.
\par
We recall the following facts (see, e.g., \cite{AHH}):
\begin{proposition} The group $H^1(\oT,\sO_\oT)$ (i.e. line bundles on $\oT$ with zero first Chern class) is generated by  $\eta^i\zeta^{-j}$, $i>0$, $0<j<2i$. The corresponding line bundles have transition functions $\exp(\eta^i\zeta^{-j})$ from $U_0$ to $U_\infty$.\hfill $\Box$\label{T}\end{proposition}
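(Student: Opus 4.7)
The plan is to compute $H^1(\oT,\sO_\oT)$ via the \v{C}ech cohomology of the two-chart cover $\{U_0,U_\infty\}$. Since $U_0\cong\oC^2$, $U_\infty\cong\oC^2$ and their intersection $U_0\cap U_\infty\cong\oC^*\times\oC$ are all Stein, Cartan's Theorem~B gives acyclicity of the cover, and Leray's theorem identifies $H^1(\oT,\sO_\oT)$ with the \v{C}ech group of this cover, namely
$$\sO(U_0\cap U_\infty)/\bigl(\sO(U_0)+\sO(U_\infty)\bigr),$$
with the understood restriction maps.

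The next step is to exploit unique Laurent expansions. In the coordinates $(\zeta,\eta)$, any holomorphic function on $U_0\cap U_\infty$ has a convergent series
$$f(\zeta,\eta)=\sum_{i\ge 0,\; j\in\oZ} c_{ij}\,\eta^i\zeta^j,$$
as it is entire in $\eta$ and Laurent in $\zeta\in\oC^*$. Elements of $\sO(U_0)$ are precisely those with $c_{ij}=0$ for $j<0$. Rewriting a section over $U_\infty$ in the $(\zeta,\eta)$ coordinates via $\tilde\zeta=\zeta^{-1}$, $\tilde\eta=\eta\zeta^{-d}$, a monomial $\tilde\eta^{i}\tilde\zeta^{k}$ becomes $\eta^{i}\zeta^{-di-k}$, so $\sO(U_\infty)$ consists of those series with $c_{ij}=0$ unless $j\le -di$. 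The quotient therefore has as a topological basis the monomials $\eta^i\zeta^{-j}$ with $i>0$ and $0<j<di$ (the case $i=0$ gives an empty range), which is exactly the stated set of generators.

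For the transition-function claim, I invoke the exponential sheaf sequence $0\to\oZ\to\sO_\oT\to\sO_\oT^*\to 0$. Since $\oT$ deformation retracts onto the zero section $\oP^1$, one has $H^1(\oT,\oZ)=0$ and $H^2(\oT,\oZ)\cong\oZ$, so the long exact sequence identifies $H^1(\oT,\sO_\oT)$ with the kernel of the first Chern class map $H^1(\oT,\sO_\oT^*)\to H^2(\oT,\oZ)$, i.e.\ with the group of line bundles on $\oT$ of vanishing $c_1$. Under this identification the additive \v{C}ech cocycle $\alpha\in\sO(U_0\cap U_\infty)$ is sent to the line bundle with multiplicative transition function $\exp(\alpha)$ from $U_0$ to $U_\infty$, as claimed.

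The main technical point is the Laurent-expansion bookkeeping: one must verify that every $f\in\sO(U_0\cap U_\infty)$ has a unique expansion converging normally on compact subsets of $\oC^*\times\oC$, so that the bidegree decomposition genuinely separates the contributions from the two charts and no hidden relations arise between the candidate generators. Once this standard Hartogs/Laurent result in several complex variables is in hand, the identification of generators and the passage from additive to multiplicative cocycles via the exponential sequence are essentially automatic.
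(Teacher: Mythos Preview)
Your argument is correct and is in fact the standard one: \v{C}ech cohomology on the Stein cover $\{U_0,U_\infty\}$, Laurent expansion in $\zeta$ and power series in $\eta$, and then the exponential sequence to pass from additive cocycles to line bundles. The paper itself offers no proof at all---the proposition is stated with a reference to \cite{AHH} and a $\Box$---so there is nothing to compare your approach against; you have simply supplied what the paper leaves to the literature.

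One point worth flagging: your computation yields generators $\eta^i\zeta^{-j}$ with $0<j<di$, not $0<j<2i$ as printed in the statement. Your range is the correct one for general $d$ (and indeed the companion Proposition immediately following in the paper, for $H^1(S,\sO_S)$, uses the bound $0<j<id$). The ``$2i$'' in the statement is either a typo or an implicit specialisation to the case $d=2$ that dominates the later hyperk\"ahler sections; in any event the discrepancy is in the statement, not in your proof.
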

\begin{proposition} The natural map $H^1(\oT,\sO_\oT)\rightarrow H^1(S,\sO_S)$ is a surjection, i.e. $H^1(S,\sO_S)$ is generated by $\eta^i\zeta^{-j}$, $0<i\leq k-1$, $0<j<id$.\hfill $\Box$\label{all}\end{proposition}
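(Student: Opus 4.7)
The plan is to use the ideal-sheaf short exact sequence
$$0 \to \sO_\oT(-S) \to \sO_\oT \to i_*\sO_S \to 0$$
on $\oT$ and the associated long exact cohomology sequence. The key vanishing $H^2(\oT, \sO_\oT(-S)) = 0$ follows because $\oT$ is covered by the two Stein (indeed affine) opens $U_0, U_\infty \cong \cx^2$ with Stein intersection $U_0 \cap U_\infty \cong \cx^*\times \cx$; by Leray/Cartan's Theorem B, the \v{C}ech cohomology with respect to this two-element cover computes sheaf cohomology, and hence $H^j$ of every quasi-coherent $\sO_\oT$-module vanishes for $j \geq 2$. The long exact sequence then produces the surjection
$$H^1(\oT, \sO_\oT) \twoheadrightarrow H^1(S, \sO_S),$$
which is the first assertion and identifies $H^1(S, \sO_S)$ with the image of the generators given by Proposition \ref{T}.

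Next I would compute this image explicitly. Starting from the \v{C}ech representatives $\eta^i \zeta^{-j}$, $i>0$, $0<j<id$, of $H^1(\oT,\sO_\oT)$, restrict them to the induced two-element cover of $S$. Whenever $i\ge k$, the defining equation of $S$ permits the substitution
$$\eta^k = -a_1(\zeta)\eta^{k-1} - \cdots - a_k(\zeta)$$
inside $\sO_S(U_0\cap U_\infty\cap S)$, rewriting the cocycle as a combination of lower powers of $\eta$ multiplied by polynomials in $\zeta$ of degree at most $dk$. Terms with non-negative $\zeta$-exponent extend holomorphically over $U_0$, while monomials $\eta^i\zeta^{-n}$ with $n \ge id$ extend over $U_\infty$; both are \v{C}ech coboundaries and die in $H^1$. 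Iterating this reduction lowers the $\eta$-degree to at most $k-1$ while keeping the $\zeta$-exponent within $-id < -j < 0$, yielding the claimed generating set $\{\eta^i\zeta^{-j}: 0<i\le k-1,\ 0<j<id\}$.

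The cohomological step is essentially automatic; the real obstacle is the bookkeeping in the reduction step, keeping track of how the substitution $\eta^k\mapsto-\sum a_l\eta^{k-l}$ interacts with the coboundary subtractions on both charts and confirming that no surviving monomial falls outside the stated index range. A useful sanity check is the dimension count $\sum_{i=1}^{k-1}(id-1) = (k-1)(dk-2)/2$, which is exactly the arithmetic genus $p_a(S) = h^1(S,\sO_S)$. This confirms that the index bounds are sharp and shows that for smooth $S$ the generating set is in fact a basis.
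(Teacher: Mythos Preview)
The paper does not actually prove this proposition: it is stated as a recalled fact with the reference \cite{AHH} and closed with a $\Box$. Your argument supplies a correct self-contained proof. The ideal-sheaf sequence together with the two-chart Leray/Cartan vanishing of $H^{\geq 2}$ on $\oT$ is the natural route to the surjectivity, and the reduction step---replacing $\eta^k$ via the defining equation and discarding the resulting coboundaries that extend over $U_0$ or $U_\infty$---is exactly how one pins down the generating set. The genus count $\sum_{i=1}^{k-1}(id-1)=(k-1)(dk-2)/2$ is a good confirmation that the index range is sharp; note, though, that it certifies the listed classes form a \emph{basis} only when $h^0(S,\sO_S)=1$ (e.g.\ $S$ connected and reduced), since for disconnected or non-reduced $S$ the Euler characteristic still equals $1-g$ but $h^0$ can jump. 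That caveat does not affect the generation statement, which is all the proposition claims.
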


Thus, the (arithmetic) genus of $S$ is $g=(k-1)(dk-2)/2$.

For a smooth $S$, the last proposition describes line bundles of degree $0$ on $S$.
In general, by a line bundle we mean an invertible sheaf. Its degree is defined  as its Euler characteristic plus $g-1$. The theta divisor $\Theta$ is  the set of line bundles  of degree $g-1$ which have a non-zero section.
\par
Let $\sO_\oT(i)$ denote the pull-back of $\sO(i)$ to $\oT$ via $\pi:\oT\rightarrow \oP^1$. If $E$ is a sheaf on $\oT$ we denote by $E(i)$ the sheaf
$E\otimes \sO_\oT(i)$ and similarly for sheaves on $S$. In particular, $\pi^\ast \sO$ is identified with $\sO_S$. We note that the canonical bundle $K_S$ is isomorphic to $\sO_S(d(k-1)-2)$.
\par
If $F$ is a line bundle of degree $0$ on $S$, determined by a cocycle $q\in H^1(\oT,\sO_\oT)$, and $s\in H^0\bigl(S, F(i)\bigr)$, then we denote by
$s_0,s_\infty$ the representation of $s$ in the trivialisation $U_0,U_\infty$, i.e.:
\begin{equation} s_\infty(\zeta,\eta)=\frac{e^q}{\zeta^i}s_0(\zeta, \eta).\label{represent}\end{equation}

 We recall the following theorem of Beauville \cite{Beau}:
%%%%
\begin{theorem} There is a $1-1$ correspondence between the affine Jacobian $J^{g-1}-\Theta$ of line bundles of degree $g-1$ on $S$ and $GL(k,\cx)$-conjugacy classes of $\gl(k,\cx)$-valued polynomials $A(\zeta)=\sum_{i=0}^d A_i\zeta^i$ such that $A(\zeta)$ is regular (i.e. its centraliser is $1$-dimensional) for every $\zeta$ and the characteristic polynomial of $A(\zeta)$ is \eqref{S}.\hfill ${\Box}$\label{Beauville} \end{theorem}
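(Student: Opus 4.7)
The plan is to set up the correspondence by push-forward under $\pi:S\to\oP^1$ in one direction and by taking a cokernel sheaf in the other, and then show the two constructions are mutual inverses.

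\textbf{From a line bundle to a matricial polynomial.}  Let $L\in J^{g-1}\setminus\Theta$.  By the defining property of $\Theta$ we have $h^0(L)=0$, and since $\chi(L)=\deg L-g+1=0$ (using $\deg L=g-1$) Riemann--Roch gives $h^1(L)=0$ as well.  Now $\pi:S\to\oP^1$ is finite of degree $k$, so $\pi_\ast L$ is a rank $k$ locally free sheaf on $\oP^1$.  By Grothendieck's theorem $\pi_\ast L\cong\bigoplus_{i=1}^k\sO(n_i)$, and the vanishings $h^0(\oP^1,\pi_\ast L)=h^0(L)=0$, $h^1(\oP^1,\pi_\ast L)=h^1(L)=0$ force every $n_i=-1$.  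Thus $\pi_\ast L(1)\cong\sO^k$ is trivial.  The fibre coordinate $\eta$ is a section of $\pi^\ast\sO(d)$ on $S$, so multiplication by $\eta$ is a morphism $L(1)\to L(1)\otimes\pi^\ast\sO(d)$; applying $\pi_\ast$ and the projection formula yields a map $\sO^k\to\sO(d)^k$, which in a chosen trivialisation is a matrix $A(\zeta)$ whose entries are polynomials of degree $\le d$ in $\zeta$.  Different trivialisations of $\pi_\ast L(1)\cong\sO^k$ differ by an element of $GL(k,\cx)$, so $A(\zeta)$ is defined up to overall conjugation.

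\textbf{Characteristic polynomial and regularity.}  Since $\eta$ acts on $L$ at a point of $S$ by its $\eta$-coordinate, the eigenvalues of $A(\zeta_0)$ are precisely the $\eta$-values of the points of $\pi^{-1}(\zeta_0)$, which gives $\det(\eta I-A(\zeta))=P(\zeta,\eta)$ up to a unit; comparison of leading terms makes them equal.  For the regularity of $A(\zeta)$ at each $\zeta_0$ I would argue that the fibre of $L$ at a (possibly singular or multiple) scheme-theoretic point of $\pi^{-1}(\zeta_0)$ is a cyclic module over the local ring, because $L$ is an invertible sheaf; this cyclicity is exactly the statement that $A(\zeta_0)$ has a one-dimensional centraliser on each generalised eigenspace, i.e.\ is regular.

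\textbf{From a matricial polynomial to a line bundle.}  Given $A(\zeta)$ as in the statement, consider the map of sheaves on $\oT$
\[
\eta\cdot I - A(\zeta):\pi^\ast\sO^k\longrightarrow \pi^\ast\sO(d)^k,
\]
whose degeneracy locus is $S$.  Let $L:=\operatorname{Coker}(\eta I-A(\zeta))(-1)$; it is supported on $S$.  The regularity hypothesis on $A(\zeta)$ at every $\zeta$ ensures that $\operatorname{Coker}(\eta I-A(\zeta))$ is locally of rank one at every (closed) point of $S$, hence $L$ is an invertible sheaf on $S$.  By construction $\pi_\ast L\cong\sO(-1)^k$, so $h^0(L)=h^1(L)=0$, which gives $L\in J^{g-1}\setminus\Theta$ (degree $g-1$ by Riemann--Roch).

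\textbf{Mutual inverseness and well-definedness.}  Starting with $L$ and running the first construction produces a cokernel presentation of $L$ that coincides with the second construction applied to the resulting $A(\zeta)$: indeed, tensoring the inclusion $\pi_\ast L(1)\hookrightarrow\pi_\ast L(d+1)$ with $\sO(-1)$ and taking the cokernel gives back $L$ on $S$ by the projection formula.  Conversely, starting with $A(\zeta)$ and pushing forward the resulting $L$ recovers the same presentation up to a choice of trivialisation, i.e.\ up to $GL(k,\cx)$-conjugacy.  The main obstacle in carrying this out is the regularity/invertibility equivalence at singular or non-reduced points of $S$: one must verify carefully at the level of local rings that $\operatorname{Coker}(\eta I-A(\zeta))$ is an invertible $\sO_S$-module exactly when $A(\zeta_0)$ is regular for every $\zeta_0$, and conversely that the restriction of a line bundle $L$ to each scheme-theoretic fibre is cyclic.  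Once this local computation is in hand, the rest is formal bookkeeping with the projection formula and Grothendieck's splitting theorem.
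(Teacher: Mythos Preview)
Your proposal is correct and follows precisely the approach the paper outlines: the paper does not actually prove this theorem (it is cited from Beauville, as indicated by the $\Box$), but the explanatory paragraph immediately following the statement describes the correspondence exactly as you do --- pushing forward $E$ to get $V=\pi_\ast E\cong\bigoplus\sO(-1)$ with the $\pi_\ast\sO$-module structure giving $A$, and conversely realising $E(1)$ as the cokernel of $\eta\cdot 1-A(\zeta):\sO_\oT(-d)^{\oplus k}\to\sO_\oT^{\oplus k}$. Your treatment simply fills in the details (Grothendieck splitting, the regularity/invertibility equivalence) that the paper leaves to the reference.
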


The correspondence is given by associating to a line bundle $E$ on $S$ its direct image $V=\pi_\ast E$, which has a structure of a $\pi_\ast \sO$-module. This is the same as a homomorphism $A:V\rightarrow V(d)$ which satisfies \eqref{S}. The condition $E\in J^{g-1}-\Theta$ is equivalent to $H^0(S,E)=H^1(S,E)=0$ and, hence, to $H^0(\oP^1,V)=H^1(\oP^1,V)=0$, i.e. $V=\bigoplus \sO(-1)$. Thus, we can interpret $A$ as a matricial polynomial precisely when $E\in J^{g-1}-\Theta$.

Somewhat more explicitly, the correspondence is seen from the exact sequence
\begin{equation} 0\rightarrow \sO_\oT(-d)^{\oplus k}\rightarrow \sO_\oT^{\oplus k}\rightarrow E(1)\rightarrow 0, \label{bundle}\end{equation}
where the first map is given by $\eta\cdot 1-A(\zeta)$ and $E(1)$ is viewed as a sheaf on $\oT$ supported on $S$. The inverse map is defined by the commuting diagram
\begin{equation}\begin{CD} H^0\bigl(S,E(1)\bigr) @>>> H^0\bigl(D_{\zeta}, E(1)\bigr)\\ @V \tilde{A}(\zeta) VV  @VV \cdot \eta V \\ H^0\bigl(S,E(1)\bigr) @>>> H^0\bigl(D_{\zeta}, E(1)\bigr), \end{CD}
\label{endom}\end{equation} where $D_{\zeta}$ is the divisor consisting of points of $S$ which lie above $\zeta$ (counting multiplicities).
That the endomorphism $\tilde{A}(\zeta)$ has degree $d$ in $\zeta$ is proved e.g. in \cite{AHH}. 

Given the above proposition, we adopt the following definition:
\begin{definition} A matricial polynomial $A(\zeta)=\sum_{i=0}^d A_i\zeta^i$ is called {\em regular}, if $A(\zeta)$ is a regular matrix for every $\zeta$.\label{regular} \end{definition}
%%%% extend to compactified Jacobian
%%%%
\begin{remark} For a singular curve $S$, Beauville's correspondence most likely extends to $\overline{J^{g-1}}-\overline{\Theta}$, where $\overline{J^{g-1}}$ is the compactified Jacobian  in the sense of \cite{Alex}. It seems to us that this is essentially proved in \cite{AHH}.\label{comp}\end{remark}

Finally, we recall the following fact (see, e.g. \cite{Beau2,Theta}):
\begin{proposition} Let $A(\zeta)$ be the matricial polynomial corresponding to $E\in J^{g-1}-\Theta$. Then $A(\zeta)^T$ corresponds to $E^\ast \otimes K_S$.\hfill $\Box$ \label{canonical}\end{proposition}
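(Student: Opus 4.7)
The plan is to identify $\pi_\ast(E^\ast\otimes K_S)$ with an appropriate twist of the dual of $V=\pi_\ast E$ by means of Grothendieck duality for the finite flat morphism $\pi:S\to\oP^1$, and then to show that the $\pi_\ast\sO_S$-module structure transposes the endomorphism $A$. Since $S$ is a Cartier divisor in the smooth surface $\oT$, it is Gorenstein, and $\pi$ is flat (visible from \eqref{bundle}); hence one has a natural $\sO_{\oP^1}$-linear isomorphism
\[
\pi_\ast\bigl(E^\ast\otimes \omega_{S/\oP^1}\bigr)\;\cong\;\Hom_{\sO_{\oP^1}}(V,\sO_{\oP^1}) = V^\vee.
\]
The relative dualising sheaf is $\omega_{S/\oP^1}=K_S\otimes\pi^\ast K_{\oP^1}^{-1}=K_S(2)$, so the projection formula gives $\pi_\ast(E^\ast\otimes K_S)\cong V^\vee(-2)$. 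By Beauville's theorem $V\cong\sO_{\oP^1}(-1)^{\oplus k}$, hence $V^\vee(-2)\cong\sO_{\oP^1}(-1)^{\oplus k}$ as well, which is precisely the shape needed for \eqref{bundle} to yield a matricial polynomial.

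To apply Theorem~\ref{Beauville} to $E^\ast\otimes K_S$ I still need it to lie in $J^{g-1}-\Theta$. Its degree is $-(g-1)+(2g-2)=g-1$, and Serre duality on the Gorenstein curve $S$ gives $H^0(S,E^\ast\otimes K_S)\cong H^1(S,E)^\ast=0$, the vanishing being part of the hypothesis $E\in J^{g-1}-\Theta$. To identify the matricial polynomial as $A(\zeta)^T$ I trace the $\pi_\ast\sO_S$-module structure through the duality: the multiplication-by-$\eta$ map, which by \eqref{endom} is precisely the endomorphism $\tilde A(\zeta)$ of interest, corresponds on $V^\vee(-2)=\Hom_{\sO_{\oP^1}}(V,\sO_{\oP^1}(-2))$ to pre-composition $\phi\mapsto\phi\circ A$; in dual bases this is exactly the transpose of $A(\zeta)$.

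The main point needing care is the transpose identification in the last step: Grothendieck duality is a priori only canonical up to an isomorphism of $V^\vee(-2)$, so one must fix trivialisations on $V$ and $V^\vee(-2)$ that are genuinely dual in order to conclude that the induced endomorphism is $A^T$ rather than some conjugate of it. Beyond this, the only subtlety is to justify Grothendieck duality when $S$ is singular or non-reduced; this is handled by the Gorenstein property of plane curves in $\oT$, which guarantees that $\omega_S$ is an honest line bundle equal to $K_S=\sO_S(d(k-1)-2)$.
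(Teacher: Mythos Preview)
The paper does not actually prove this proposition; it is stated as a recalled fact with a citation to \cite{Beau2,Theta} and closed with a $\Box$. So there is no proof in the paper to compare against directly.

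Your argument via Grothendieck duality for the finite flat $\pi:S\to\oP^1$ is correct. The identification $\pi_\ast(E^\ast\otimes K_S)\cong V^\vee(-2)$ and the check that $E^\ast\otimes K_S\in J^{g-1}-\Theta$ are fine, and the $\pi_\ast\sO_S$-linearity of the duality isomorphism does give that $\eta$ acts on $V^\vee$ by precomposition with $A$, hence by $A^T$ in dual bases. Your caveat about fixing dual trivialisations is slightly overcautious: Beauville's correspondence is with $GL(k,\cx)$-conjugacy classes, so even an arbitrary trivialisation of $V^\vee(-2)\cong\sO(-1)^{\oplus k}$ would yield a conjugate of $A^T$, which is all the statement asserts.

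The argument in the cited references is a more concrete variant of the same idea: rather than invoking relative duality, one transposes the locally free resolution \eqref{bundle} on $\oT$ directly. Applying $\Hom_{\sO_\oT}(-,\sO_\oT)$ to \eqref{bundle} and twisting by $\sO_\oT(-d)$ gives
\[
0\longrightarrow \sO_\oT(-d)^{\oplus k}\xrightarrow{\ \eta-A(\zeta)^T\ }\sO_\oT^{\oplus k}\longrightarrow \Ext^1_{\sO_\oT}\bigl(E(1),\sO_\oT\bigr)(-d)\longrightarrow 0,
\]
and one computes $\Ext^1_{\sO_\oT}(E(1),\sO_\oT)\cong E^\ast(-1)\otimes N_{S/\oT}$ with $N_{S/\oT}=K_S\otimes K_\oT^{-1}|_S=K_S(d+2)$, so the cokernel is $E^\ast\otimes K_S(1)$. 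This avoids the abstract relative-duality machinery at the cost of an explicit normal-bundle computation; your route is cleaner conceptually and makes the $\pi_\ast\sO_S$-linearity (hence the transpose) more transparent.
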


\section{Conjugacy classes with respect to subgroups\label{conj}}

We assume that $S\in |\sO(d)|$ is a compact reduced curve of degree $k$. We want to describe the conjugacy classes of matricial polynomials  $A(\zeta)=\sum_{i=0}^d A_i\zeta^i$ with respect to several subgroups of $GL(k,\cx)$: 
\begin{itemize}
\item maximal parabolic:
$$ P=\left\{\begin{pmatrix} g & \ast \\ 0 & m\end{pmatrix};  \enskip g\in GL(k-1,\cx), m\in \cx^\ast\right\};$$
\item maximal reductive:
$$G_{k-1}=\left\{\begin{pmatrix} g & 0 \\ 0 & m\end{pmatrix}; \enskip g\in GL(k-1,\cx), m\in \cx^\ast\right\};$$
\item maximal torus $T$, consisting of diagonal matrices.\end{itemize}

We denote by $U_{g+k-1}$ the open subset of effective (Cartier) divisors $D$ of degree $g+k-1=dk(k-1)/2$, such that $[D](-1)\not\in \Theta$. Observe that Proposition \ref{canonical} implies that, if $L$ is a line bundle of degree $g+k-1$ and $L(-1)\not\in \Theta$, then $L^\ast(dk-d-1)\not \in \Theta$. Thus, if $D\in  U_{g+k-1}$, then the divisor of any section of $\sO_S(dk-d)[-D]$ also lies in $U_{g+k-1}$.
\par
We need the following fact about sections of $\sO_S(l)$, the proof of which proceeds analogously to that \cite[Proposition (4.5)]{Hit} or \cite[Lemma (2.16]{HuMu}.
\begin{lemma} If $l<dk$, then any section $s\in H^0(S,\sO(l))$ may be written uniquely in the form
$$s=\sum_{i=0}^{[l/d]}\eta^i\pi^\ast c_i,$$
where $c_i\in H^0(\oP^1,\sO(l-di))$.\hfill $\Box$\label{O(l)}\end{lemma}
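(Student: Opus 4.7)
My plan is to compute the push-forward sheaf $\pi_\ast\sO_S(l)$ on $\oP^1$ and then take its global sections, in the style of the arguments cited in the statement. The starting point is that $S$ is cut out in $\oT$ by the monic equation $P(\zeta,\eta)=\eta^k+a_1(\zeta)\eta^{k-1}+\cdots+a_k(\zeta)=0$, so $\sO_S$ is locally free of rank $k$ over $\sO_{\oP^1}$ with basis $1,\eta,\dots,\eta^{k-1}$. The transition rule $\tilde\eta=\eta/\zeta^d$ identifies the $\sO_{\oP^1}$-submodule generated by $\eta^i$ with $\sO(-di)$, giving the splitting
\[
\pi_\ast\sO_S\;\cong\;\bigoplus_{i=0}^{k-1}\sO(-di).
\]

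Tensoring with $\sO(l)$ and applying the projection formula yields $\pi_\ast\bigl(\sO_S(l)\bigr)\cong\bigoplus_{i=0}^{k-1}\sO(l-di)$. Since $\pi$ is affine, $H^0\bigl(S,\sO_S(l)\bigr)=H^0\bigl(\oP^1,\pi_\ast\sO_S(l)\bigr)$, and the summands $\sO(l-di)$ with $l-di<0$ contribute no global sections. The hypothesis $l<dk$ forces $[l/d]\leq k-1$, so the surviving indices are precisely $0\leq i\leq[l/d]$, and one reads off
\[
s=\sum_{i=0}^{[l/d]}\eta^i\pi^\ast c_i,\qquad c_i\in H^0\bigl(\oP^1,\sO(l-di)\bigr),
\]
with uniqueness following at once from the direct-sum decomposition.

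The only step that requires any care is the identification of the summand generated by $\eta^i$ with $\sO(-di)$: this rests on the fact that $P$ is monic of degree $k$ in $\eta$, so that $1,\eta,\dots,\eta^{k-1}$ are genuinely free $\sO_{\oP^1}$-generators of $\pi_\ast\sO_S$, combined with the routine bookkeeping dictated by $\tilde\eta=\eta/\zeta^d$. I anticipate no substantive obstacle here: this is the direct analogue of \cite[Proposition 4.5]{Hit} and \cite[Lemma 2.16]{HuMu} cited in the statement, adapted to the total space $\oT=\Tot\sO(d)$.
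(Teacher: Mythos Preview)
Your proof is correct and follows essentially the same approach as the paper indicates: the paper does not give its own argument but refers to \cite[Proposition~(4.5)]{Hit} and \cite[Lemma~(2.16)]{HuMu}, both of which proceed exactly by using monicity of $P$ in $\eta$ to split $\pi_\ast\sO_S$ as $\bigoplus_{i=0}^{k-1}\sO(-di)$ and then reading off global sections after twisting. Your write-up is a clean, self-contained version of that standard computation.
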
 

In particular, any section of $\sO_S(dk-d)$ can be written uniquely as $\sum_{i=0}^{k-1}\eta^i\pi^\ast c_i$, with $c_{k-1}$ being a constant. Denoting by $(s)$ the divisor of a section $s$, we define 
\begin{equation} R_{dk-d}=\left\{\Bigl(\sum_{i=0}^{k-1}\eta^i\pi^\ast c_i\Bigr)\in |\sO_S(dk-d)|;\enskip c_{k-1}\neq 0\right\}.\end{equation}

We can describe conjugacy classes with respect to groups $P$ and $G_{k-1}$ in algebro-geometric terms:

\begin{proposition} Let $S$ be a reduced curve defined by \eqref{S}. Let $M_S$ be the space of regular matricial polynomials $A(\zeta)=\sum_{i=0}^d A_i\zeta^i$, $A_i\in \gl(k,\cx)$, the characteristic polynomial of which is $P(\zeta,\eta)$. There exist natural bijections:
\begin{itemize}
\item[(i)] $M_S/P \simeq U_{g+k-1}$.
\item[(ii)] $M_S/G_{k-1}\simeq \sD=\left\{(D,D^\prime)\in U_{g+k-1}\times U_{g+k-1};\enskip D+D^\prime \in R_{dk-d}\right\}.$
\end{itemize}\label{PG}
\end{proposition}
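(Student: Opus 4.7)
For (i), the key is that $g\in P\iff g^{-T}\in P^-$, where $P^-$ is the lower-triangular maximal parabolic, and $(gAg^{-1})^T=g^{-T}A^Tg^T$; hence transposition $A\mapsto A^T$ identifies $M_S/P$ with $M_S/P^-$. By Proposition \ref{canonical}, $A^T$ corresponds to $F:=E^*\otimes K_S$, and a $P^-$-orbit of a basis of $V'(1)=\pi_\ast F(1)$ retains exactly the line $[t]\subset H^0(S,F(1))$ spanned by the last basis vector. Thus $P$-conjugacy classes of $A$ biject with pairs $(E,[t])$, $E\in J^{g-1}-\Theta$ and $[t]\in\oP(H^0(S,E^*K_S(1)))$. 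Taking the divisor $D:=(t)$ gives an effective divisor of degree $g+k-1$ with $\sO_S(D)(-1)=E^*\otimes K_S$. Serre duality gives $E^*K_S\in\Theta\iff H^1(E)\neq 0$, and Riemann--Roch on degree $g-1$ line bundles gives $H^1(E)\neq 0\iff E\in\Theta$, so $[A]_P\mapsto D$ lands in $U_{g+k-1}$. The inverse sends $D$ to $F=\sO_S(D)(-1)\in J^{g-1}-\Theta$, then to $E=F^*\otimes K_S\in J^{g-1}-\Theta$; Beauville's theorem and the canonical section of $\sO_S(D)$ supply the $P$-class.

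For (ii), the identity $G_{k-1}=P\cap P^-$ means a $G_{k-1}$-class determines both a $P$-class and a $P^-$-class of the same $A$, yielding $D=(t_k)\in U_{g+k-1}$ with $t_k\in H^0(S,E^*K_S(1))$ (from the $P$-refinement as in (i)), and $D'=(s_k)\in U_{g+k-1}$ with $s_k\in H^0(S,E(1))$ (from the $P^-$-refinement, which now retains the line $[s_k]\subset H^0(S,E(1))$; note $\sO_S(D')(-1)=E$). Since $E(1)\otimes E^*K_S(1)=K_S(2)=\sO_S(d(k-1))$, one has $D+D'=(s_k t_k)\in |\sO_S(dk-d)|$. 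The remaining $c_{k-1}\neq 0$ condition in $R_{dk-d}$ must encode the transversality $[s_k]\oplus\ker(t_k)=H^0(S,E(1))$ between the hyperplane and the line in $H^0(S,E(1))$ supplied by the $P$- and $P^-$-refinements; indeed the $G_{k-1}$-orbit of a basis of $H^0(S,E(1))$ realises exactly the transversal pairs (hyperplane, complementary line), whereas non-transversal pairs have stabiliser in $GL(k,\cx)$ strictly larger than $G_{k-1}$. Granting this equivalence, the bijection $M_S/G_{k-1}\simeq\sD$ is formal: for $(D,D')\in\sD$, the relation $[D+D']=\sO_S(dk-d)$ forces both divisors to recover the same $E$, the transversality lifts the pair of $P$- and $P^-$-refinements to a $G_{k-1}$-refinement, and Beauville's theorem supplies the underlying $GL(k,\cx)$-class.

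The main obstacle is therefore proving the equivalence between $c_{k-1}(s_k t_k)\neq 0$ and the transversality, for which I would establish $c_{k-1}(s_i t_j)=\delta_{ij}$ for the bases $(s_i)$ of $H^0(S,E(1))$ and $(t_j)$ of $H^0(S,E^*K_S(1))$ arising from $A$ and $A^T$. Over the complement of the branch locus of $\pi$, the $k$ points $(\zeta,\eta_l)$ of $S$ above $\zeta$ are distinct, and Lagrange interpolation applied to any section $s\in H^0(S,\sO_S(d(k-1)))$ (of degree $\leq k-1$ in $\eta$ on each fibre) yields
\[c_{k-1}(s)(\zeta)=\sum_{l=1}^{k}\frac{s(\zeta,\eta_l)}{\prod_{m\neq l}(\eta_l-\eta_m)}.\]
The cokernel descriptions of $E(1)$ and $F(1)$ identify $s_i(\zeta,\eta_l)$ and $t_j(\zeta,\eta_l)$ with the $i$-th and $j$-th entries of left and right eigenvectors $w_l,v_l$ of $A(\zeta)$ at eigenvalue $\eta_l$, normalised so that $w_l^T v_l=P'(\zeta,\eta_l)=\prod_{m\neq l}(\eta_l-\eta_m)$. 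Biorthogonality of eigenvectors and the spectral decomposition $\mathrm{Id}=\sum_l v_l w_l^T/P'(\zeta,\eta_l)$ then yield $c_{k-1}(s_i t_j)=\delta_{ij}$, showing that $(s,t)\mapsto c_{k-1}(st)$ is the natural duality pairing and completing (ii).
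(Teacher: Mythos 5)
Your proposal is correct in substance and is built on the same skeleton as the paper's proof: the invariants you attach to a $P$-class (resp.\ a $G_{k-1}$-class) are exactly the paper's divisors $(s^\prime)$ and $\bigl((s),(s^\prime)\bigr)$ of the sections of $\Coker(\eta-A)$ and $\Coker(\eta-A^T)$ obtained by projecting $e_k$, and your inverse constructions via bases adapted to a line, resp.\ a transversal (line, hyperplane) pair, coincide with the paper's $\Phi_1^{-1},\Phi_2^{-1}$, which use the flag $\{0\}\subset V\subset H^0(S,\sO(dk-d)[-D])$ with $V$ the subspace of sections with no $\eta^{k-1}$-term. Where you genuinely diverge is in the key non-degeneracy step. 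The paper proves the sharper divisor identity \eqref{D+D}, namely $(s)+(s^\prime)=\bigl(\det(\eta-A_{(k-1)})\bigr)$, by combining the adjugate formula \eqref{s} with the Weinstein--Aronszajn identity \eqref{WA} and a rank-one argument for $(\eta-B)_{\rm adj}$ at points of $S\cap S_{k-1}$; membership in $R_{dk-d}$ then follows since $\det(\eta-A_{(k-1)})$ is monic of degree $k-1$ in $\eta$. You instead prove that $(s,t)\mapsto c_{k-1}(st)$ is the duality pairing, $c_{k-1}(s_it_j)=\delta_{ij}$, via Lagrange interpolation and the spectral decomposition of the identity; this is correct (it amounts to observing that $s_it_j$ is the $(j,i)$-entry of $(\eta-A(\zeta))_{\rm adj}$, which has $\eta$-degree at most $k-2$ off the diagonal and is monic of degree $k-1$ on it), it makes the transversality interpretation of $c_{k-1}\neq 0$ transparent, and it replaces the paper's separate existence lemma (every $D\in U_{g+k-1}$ admits $D^\prime$ with $D+D^\prime\in R_{dk-d}$) by non-degeneracy of the pairing. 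What you lose is the identification of $D+D^\prime$ with the divisor cut out by $S_{k-1}$, which the paper needs afterwards for Proposition \ref{O(2)}. Two small corrections: the stabiliser of a non-transversal (hyperplane, line) pair is not ``strictly larger than $G_{k-1}$'' --- it has the same dimension and is merely a non-conjugate, non-reductive subgroup; the fact you actually need is only that $GL(k,\cx)$ acts transitively on transversal pairs with stabiliser $G_{k-1}$. Also, identifying $P$-classes with pairs $(E,[t])$ tacitly uses $H^0(S,\sO_S^\ast)=\cx^\ast$, which holds because $S$ is connected, and the reducedness of $S$, which the paper invokes in its existence lemma, enters your argument instead through the genericity of distinct eigenvalues needed for the spectral decomposition.
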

%%%%%%%%%%%%%%%%%%%%%%%
\begin{remark} The assumption that $S$ is reduced (i.e. without multiple components) is not needed in (ii). Although we make use of it in (i), this is probably also unnecessary.\end{remark} 
\begin{remark} In (i), the natural projection $M_S/P\rightarrow M_S/GL(k,\cx)$ corresponds to the map $U_{g+k-1}\rightarrow J^{g-1}-\Theta$ given by $D\mapsto \sO(dk-d-1)[-D]$.\end{remark}
\begin{remark} In (ii),  $D+D^\prime=\bigl(\det(\eta-A_{(k-1)}(\zeta))\bigr)$, where $A_{(k-1)}$ denotes the upper-left $(k-1)\times (k-1)$-minor of $A$.  Moreover, the projection on either factor realises $\sD$ as a $\cx^{k-1}$-bundle (not a line bundle - it should be viewed as a $(\oP^{k-1}-\oC^{k-2})$-bundle) over $U_{g+k-1}$.\end{remark}

We now discuss $T$-conjugacy classes. We consider a parameterised version of the Gelfand-Zeitlin map \cite{GKL,KW,Bie-Pidst}, and associate to $A(\zeta)$ $k$ spectral curves:
\begin{equation}  S_m=\left\{(\zeta,\eta)\in \oT;\enskip \det\bigl(\eta\cdot 1- A_{(m)}(\zeta)\bigr)=0\right\},\quad m=1,\dots,k, \label{S_m}\end{equation}
where $A_{(m)}(\zeta)$ is the the upper-left $m\times m$ minor of $A(\zeta)$. 

We are going to describe only an open subset of $M_S/T$. Set
\begin{equation} M_S^0=\left\{A(\zeta)\in M_S;\enskip \text{$A_{(m)}(\zeta)$ is regular for every $\zeta$ and every $m=1,\dots,k$}\right\}.\label{MS0}\end{equation}

\begin{proposition} 
Let $S_1,\dots S_{k-1},S_k=S$ be curves in $\oT\simeq \Tot\sO(d)$ with $S_m\in |\sO_\oT(md)|$ for $m=1,\dots,k$. Then there exists a matricial polynomial $A(\zeta)\in M_S^0$ of degree $d$ such that each $S_m$ is defined by \eqref{S_m} if and only if on each $S_m$ with $m\in\{1,\dots,k-1\}$ there exists a divisor $D_m$ of degree $g+m-1=dm(m+1)/2$  satisfying the following conditions 
\begin{itemize} \item[(i)] $D_m$ is a subdivisor of $S_m\cap S_{m+1}$;
\item[(ii)] $H^0\bigl(S_{m}, \sO(dm-d-1)[-D]\bigr)=0$;
\item[(iii)] For each $m\in\{2,\dots,k-1\}$, $[D_{m}-D_{m-1}]\simeq \sO_{S_m}(d)$.\end{itemize}
Moreover, there is a $1-1$ correspondence between these data and $M_S^0/T$.  \label{O(2)}\end{proposition}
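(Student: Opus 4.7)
The approach is inductive, peeling off one level of a $T$-fixed full flag $V_1\subset\dots\subset V_k$ at a time. Applying the $m\mapsto m+1$ version of Proposition~\ref{PG}(ii) (with $S$ replaced by $S_{m+1}$ and $G_{k-1}$ by the block-diagonal subgroup $G_m=GL(m,\cx)\times\cx^\ast\subset GL(m+1,\cx)$) reduces $T$-conjugacy to a sequence of $G_m$-conjugacies, one per step of the flag.

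\emph{Forward direction.} Given $A(\zeta)\in M_S^0$, define $S_m$ by~\eqref{S_m}. Regularity of each minor puts $A_{(m+1)}$ in $M_{S_{m+1}}$, and Proposition~\ref{PG}(ii) applied at level $m+1$ sends the $G_m$-orbit of $A_{(m+1)}$ to a pair $(D_m,D_m')$ with $D_m+D_m'$ cut out on $S_{m+1}$ by $\det\bigl(\eta-A_{(m)}(\zeta)\bigr)$, i.e.\ equal to the intersection $S_m\cap S_{m+1}$. Regarding $D_m$ as a divisor on $S_m$ supported in the intersection gives (i), while (ii) is the explicit translation, via Serre duality, of the open condition $D_m\in U_{g_{m+1}+m}$. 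For (iii) I compare the Beauville line bundles of $A_{(m)}$ and $A_{(m+1)}$ along $S_m$: the block structure of $A_{(m+1)}$ forces the divisor difference $[D_m-D_{m-1}]$ on $S_m$ to coincide with the twist $\sO_{S_m}(d)$ that enters Beauville's normalisation at consecutive levels (compatibly with Proposition~\ref{canonical}); the degree count $\deg D_m-\deg D_{m-1}=dm$ is a consistency check.

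\emph{Inverse direction.} I build $A$ inductively. $A_{(1)}(\zeta)$ is the scalar polynomial defining $S_1$. Given $A_{(m)}$, I apply Proposition~\ref{PG}(ii) on $S_{m+1}$ with the pair $\bigl(D_m,(S_m\cap S_{m+1})-D_m\bigr)$---the complement lies in $U_{g_{m+1}+m}$ by the observation preceding Lemma~\ref{O(l)}---to obtain $A_{(m+1)}$ up to $G_m$-conjugation. Condition (iii) is then exactly what allows me to select within this $G_m$-class a representative whose upper-left $m\times m$ minor equals the already-constructed $A_{(m)}$, the remaining freedom being the $T$-action on the $(m{+}1)$-st coordinate. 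Iterating from $m=1$ up to $m=k-1$ produces a matricial polynomial, unique up to $T$-conjugation; the forward and inverse constructions are mutually inverse by uniqueness in Proposition~\ref{PG}(ii) at each level.

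\emph{Main obstacle.} The heart of the argument is condition (iii): establishing it from the matricial side and, more importantly, showing in the inverse direction that it is precisely the compatibility needed to glue the successive Beauville constructions at levels $m$ and $m+1$. This will require chasing the exact sequence~\eqref{bundle} for $A_{(m+1)}$ under restriction to $S_m$, comparing it with the analogous sequence for $A_{(m)}$, and identifying the twist by $\sO_{S_m}(d)$ with the difference between the canonical bundles $K_{S_{m+1}}|_{S_m}$ and $K_{S_m}$.
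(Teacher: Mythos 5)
Your proposal follows essentially the same route as the paper: iterate Proposition \ref{PG}(ii) one level of the flag at a time, with the open conditions supplied by $U_{g+m}$ and the observation preceding Lemma \ref{O(l)}, and obtain (iii) from the fact that $A_{(m)}$ corresponds under Beauville's correspondence both to $\sO_{S_m}(dm)[-D_m]$ (as the minor of $A_{(m+1)}$) and to $\sO_{S_m}(d(m-1))[-D_{m-1}]$ (as a matricial polynomial in its own right). The exact-sequence chase you flag as the main obstacle is unnecessary --- this double correspondence is already the entire content of (iii), exactly as in the paper's proof.
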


The remainder of the section is devoted to a proof of these two propositions. We remark that many arguments are adapted from \cite{HuMu}. 

 We begin with a given matricial polynomial $A(\zeta)$.
\par
Let $F=E(1)$ be the line bundle given by \eqref{bundle}, i.e. $F=\Coker\bigl(\eta-A(\zeta)\bigr)$. Let $G$ be the line bundle corresponding to $A^T(\zeta)$, i.e., owing to Proposition \ref{canonical}, $G=F^\ast\otimes K_S(2)$. Both $F$ and $G$ are line bundles of degree $g+k-1=dk(k-1)/2$, and 
\begin{equation} F\otimes G\simeq K_S(2)\simeq \sO_S(dk-d).\label{FG}\end{equation}

We consider sections $s,s^\prime$ of $F,G$ obtained by projecting the constant section $e_k=(0,\dots,0,1)^T$ of $\sO_\oT^{\oplus k}$. We associate divisors to $A(\zeta)$ via the maps
\begin{equation} \Phi_1:A(\zeta)\mapsto (s^\prime),\quad \Phi_2:A(\zeta)\mapsto \bigl((s),(s^\prime)\bigr).\label{Phi}\end{equation}
From the definition, $\Phi_1$ is $P$-invariant and $\Phi_2$ is $G_{k-1}$-invariant. Moreover, the image of $\Phi_1$ lies $U_{g+k-1}$, while the image of $\Phi_2$ lies in 
$$ \overline\sD=\left\{(D,D^\prime)\in U_{g+k-1}\times U_{g+k-1};\enskip D+D^\prime \in |\sO_S(dk-d)|\right\}.$$
We need to show that $\Phi_2$ maps into $R_{dk-d}$. In fact, we shall show that
\begin{equation} (s)+(s^\prime)=\bigl(\det(\eta-A_{(k-1)}(\zeta))\bigr),\label{D+D}\end{equation}
i.e. $ss^\prime\in H^0(S,\sO(dk-d))$ defines the curve $S_{k-1}$.
\par
%%%%%%%%%%%%%%%%%%%%%%%%%%%5
Let the subscript ${\rm adj}$ denotes the classical adjoint:
$$ (\eta-A(\zeta))_{\rm adj}(\eta-A(\zeta))=(\eta-A(\zeta))(\eta-A(\zeta))_{\rm adj}=\det(\eta-A(\zeta))\cdot 1.$$
%%%%%%%%%%%%%%%%%%%%%%%%%%%%%%%%%%%%%%%%%
It follows that  $(s)$ coincides with the zero-divisor of  $(\eta-A(\zeta))_{\rm adj}e_k$, the latter being a section of $\Ker(\eta-A(\zeta))\simeq G^\ast$. Let us write 
$$ A=\begin{pmatrix} B &y \\ x & c\end{pmatrix},\quad B\in \gl(k-1,\cx),\enskip  x\in \Hom(\cx^{k-1},\cx), \enskip  y\in \Hom(\cx,\cx^{k-1}),\enskip c\in \cx.$$
 Computing minors along the last row gives
\begin{equation} (\eta-A(\zeta))_{\rm adj}e_k=\begin{pmatrix} \vspace{1mm} -(\eta-B(\zeta))_{\rm adj}y\\  \det(\eta-B(\zeta))\end{pmatrix}.\label{s}\end{equation}
%%%%%%%%%%%%%
Similarly 
\begin{equation} (\eta-A(\zeta))^T_{\rm adj}e_k=\begin{pmatrix} \vspace{1mm} -(\eta-B(\zeta))^T_{\rm adj}x^T\\  \det(\eta-B(\zeta))\end{pmatrix}.\label{s'}\end{equation}
These formulae imply that $(s)$ and $(s^\prime)$ are subdivisors of $\bigl(\det(\eta-B(\zeta))\bigr)$.

We now use the Weinstein-Aronszajn formula:
\begin{equation} \det(\eta-A(\zeta))=(\eta-c(\zeta))\det(\eta-B(\zeta))-x(\zeta)(\eta-B(\zeta))_{\rm adj}y(\zeta),\label{WA}\end{equation}
from which we conclude $(s)=\bigl((\eta-B(\zeta))_{\rm adj}y(\zeta)\bigr)$, $(s^\prime)=\bigl(x(\zeta)(\eta-B(\zeta))_{\rm adj}\bigr)$. In addition, $\bigl(\det(\eta-B(\zeta))\bigr)=\bigl(x(\zeta)(\eta-B(\zeta))_{\rm adj}y(\zeta)\bigr)$ (as divisors on $S$).  Since at a point of $S\cap S_{k-1}$, $\eta-B(\zeta)$ has corank $1$, $(\eta-B(\zeta))_{\rm adj}$ has rank $1$, and so $(\eta-B(\zeta))_{\rm adj}=uv^T$ for a pair of vectors $u,v$. Therefore $0=x(\zeta)(\eta-B(\zeta))_{\rm adj}y(\zeta)=xuv^Ty$, which means that either $xu=0$ or $v^Ty=0$, and, hence, either $x(\eta-B(\zeta))_{\rm adj}=0$ or  $(\eta-B(\zeta))_{\rm adj}y=0$. This proves \eqref{D+D}.

\medskip

We construct the inverse mapping to $\Phi_1$. We need the following lemma.
%%%%%%%%%%%%%%%%%%%%%%%%%%%%%
\begin{lemma} 
Let $D\in U_{g+k-1}$. There exists a $D^\prime\in U_{g+k-1}$, such that $D+D^\prime\in R_{dk-d}$.\end{lemma}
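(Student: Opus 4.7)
The plan is to reformulate the conclusion: it suffices to produce a section $s\in H^0(S,L)$, where $L:=\sO_S(dk-d)[-D]$, whose expansion from Lemma~\ref{O(l)}, $s=\sum_{i=0}^{k-1}c_i(\zeta)\eta^i$, satisfies $c_{k-1}\neq 0$. For then $D^\prime:=(s)-D$ is effective of degree $g+k-1$, with $D+D^\prime=(s)\in R_{dk-d}$; moreover $D^\prime\in U_{g+k-1}$ automatically, since $[D^\prime]=L$ gives $[D^\prime](-1)=\sO_S(dk-d-1)[-D]\notin\Theta$ by the duality noted just before the statement of Proposition~\ref{PG} (an application of Proposition~\ref{canonical}), equivalent to the hypothesis $[D](-1)\notin\Theta$.

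To produce $s$, I would choose $\zeta_0\in\oP^1\setminus\pi(D)$ distinct from $\infty$ and a section $\ell\in H^0(\oP^1,\sO(1))$ vanishing at $\zeta_0$. The exact sequence
\begin{equation*}
0\to L(-1)\xrightarrow{\cdot\ell}L\to L|_{S_{\zeta_0}}\to 0
\end{equation*}
on $S$, where $S_{\zeta_0}=\pi^{-1}(\zeta_0)\cap S$ is a length-$k$ subscheme disjoint from $D$, combined with the same duality (applied now to $L(-1)$) yields $H^0(L(-1))=H^1(L(-1))=0$; hence restriction is an isomorphism
\begin{equation*}
H^0(S,L)\xrightarrow{\sim}H^0(S_{\zeta_0},\sO_{S_{\zeta_0}})\cong\oC[\eta]/\bigl(P(\zeta_0,\eta)\bigr),
\end{equation*}
the right-hand side being a $k$-dimensional vector space with basis $1,\eta,\dots,\eta^{k-1}$, since $P(\zeta_0,\eta)$ is monic of degree $k$ in $\eta$ (so this remains a basis even when $S_{\zeta_0}$ is non-reduced).

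The final step is to read off $c_{k-1}$ under this isomorphism. A section $s=\sum c_i(\zeta)\eta^i$ restricts at $\zeta_0$ to $\sum c_i(\zeta_0)\eta^i$, a polynomial of degree $<k$ in $\eta$ which represents itself in the quotient without further reduction. Because $c_{k-1}(\zeta)$ is a constant, the linear map $s\mapsto c_{k-1}$ is identified with the $\eta^{k-1}$-coordinate functional on $\oC[\eta]/\bigl(P(\zeta_0,\eta)\bigr)$, a manifestly nonzero form on $\oC^k$. Hence the restriction of $c_{k-1}$ to $H^0(L)$ is surjective and almost every $s$ in this $k$-dimensional space works. The only delicate point, as I see it, is the cohomology vanishing for $L(-1)$ derived from $D\in U_{g+k-1}$ via Proposition~\ref{canonical}; once that is in hand, the remainder is a direct bookkeeping in Lemma~\ref{O(l)} coordinates.
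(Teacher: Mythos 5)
Your proof is correct, and while it shares its skeleton with the paper's, the endgame is genuinely different and buys something. Both arguments reduce to producing a section of $\sO_S(dk-d)[-D]$ whose Lemma \ref{O(l)}-expansion has $c_{k-1}\neq 0$, and both feed in the same cohomological input, namely $h^0=h^1=0$ for $\sO_S(dk-d-1)[-D]$, which follows from $[D](-1)\notin\Theta$ via the Serre-duality observation (Proposition \ref{canonical}) recorded just before Proposition \ref{PG}; in both cases this vanishing says that restriction of $H^0(S,\sO_S(dk-d)[-D])$ to a fibre of $\pi$ is an isomorphism. The paper then argues by interpolation: it takes a fibre consisting of $k$ \emph{distinct} points, chooses $s'$ nonvanishing at exactly one point $p$ of that fibre with $s(p)\neq 0$ for the section $s$ cutting out $D$, and observes that if $c_{k-1}=0$ the product $ss'$ would be a polynomial of degree $\leq k-2$ in $\eta$ on the fibre vanishing at $k-1$ of the $k$ distinct points but not at all of them --- a contradiction. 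You instead identify the target of the restriction map with $\oC[\eta]/\bigl(P(\zeta_0,\eta)\bigr)$ and recognise $s\mapsto c_{k-1}$ as the $\eta^{k-1}$-coordinate functional, manifestly nonzero. The payoff is that your version never needs the fibre to be reduced, so it dispenses with the hypothesis that $S$ is reduced --- precisely the hypothesis the paper flags, in the remark following this lemma, as being used only here, and which the remark after Proposition \ref{PG} conjectures to be unnecessary. Two small points of hygiene: Lemma \ref{O(l)} is stated for sections of $\sO_S(l)$, so you should say explicitly that you identify $H^0(S,L)$ with the subspace of sections of $\sO_S(dk-d)$ vanishing on $D$ before invoking the expansion; and, exactly as in the paper's implicit use of the observation that ``the divisor of any section of $\sO_S(dk-d)[-D]$ lies in $U_{g+k-1}$'', one should add that a generic $s$ in your $k$-dimensional space also vanishes identically on no component of $S$ (sections vanishing on a fixed component form a proper subspace, since restriction to a fibre is injective), so that $(s)$ and hence $D'$ are honest effective Cartier divisors.
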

%%%%%%%%%%%%%%%%%
\begin{proof} (cf. \cite[p.181]{Hit}. Let $s$ be a section defined by $D$. Since $[D](-1)\not \in \Theta$, $s$ does not vanish identically on any fibre of $\pi:S\rightarrow \oP^1$, which consists of $k$ distinct points. Let $\pi^{-1}(\zeta)$ be such a fibre. Since  $L=\sO_S(dk-d-1)[-D]\not\in \Theta$, there exists a section $s^\prime$, which does not vanish at exactly one point $p\in \pi^{-1}(\zeta)$, and we may assume that $s(p)\neq 0$. Thus, $(ss^\prime)(p)\neq 0$ and $ss^\prime$ vanishes at the remaining $k-1$ points of $\pi^{-1}(\zeta)$. On the other hand, if we had  $ss^\prime=\sum_{i=0}^{k-2}\eta^ic_i(\zeta)$, then the vanishing of $ss^\prime$ at $k-1$ points of the fibre would imply that $ss^\prime$ vanishes on the whole fibre, which is a contradiction.\end{proof}

\begin{remark} This lemma is the only place, where the assumption that $S$ is reduced is used.\end{remark}

Let $D\in U_{g+k-1}$. The above lemma and Lemma \ref{O(l)} imply that there exists a section of $\sO_S(dk-d)[-D]$ of the form $\eta^{k-1}+\sum_{i=0}^{k-2}\eta^ic_i(\zeta)$ and, hence, there exists a well-defined $(k-1)$-dimensional subspace $V$ of $H^0(S, \sO(dk-d)[-D])$ of the form $\sum_{i=0}^{k-2}\eta^ic_i(\zeta)$. Computing the endomorphism $A(\zeta)$ with respect to the flag $\{0\}\subset V\subset H^0(S, \sO(dk-d)[-D])$ defines the inverse map $\Phi_1^{-1}:U_{g+k-1}\rightarrow M_S/P$.

\medskip

We can now construct the inverse mapping to $\Phi_2$. Let $(D,D^\prime)\in \sD$. Let $s=\eta^{k-1}+\sum_{i=0}^{k-2}\eta^ic_i(\zeta)$ be the unique, up to a constant multiple, section of $\sO_S(dk-d)$, whose divisor is $D+D^\prime$. 
This time we have a direct sum decomposition $ H^0(S, \sO(dk-d)[-D^\prime])=\cx s\oplus V$, where $V$ is defined as for $\Phi_1^{-1}$. Computing the endomorphism $A(\zeta)$ with respect to this decomposition defines the inverse map $\Phi_2^{-1}:\sD\rightarrow M_S/G_{k-1}$.

\medskip It remains to prove Proposition \ref{O(2)}. The vector space $V$ defined in the construction of $\Phi_2^{-1}$ (for $S=S_k$) can be also viewed as $ H^0(S_{k-1}, \sO(d(k-1))[-D^\prime])$, where $S_{k-1}$ is defined by \eqref{S_m}. Thus, $A_{(k-1)}(\zeta)$ is the endomorphism \eqref{endom} for a particular basis of  $ H^0(S_{k-1}, \sO(d(k-1))[-D^\prime])$. It follows, again from  the construction of $\Phi_2^{-1}$, that $A(\zeta)$ is determined, up to conjugation by the centre of $G_{k-1}$, by $A_{(k-1)}(\zeta)$ and the divisor $D_{k-1}=D^\prime$ on $S_{k-1}$. Applying now the argument to $A_{(k-1)}(\zeta)$, and so on, we get the divisors $D_m$, $m=k-1,\dots,1$, which (together with the curves $S_m$) determine $A(\zeta)$ up to conjugation by $T$. The $D_m$ clearly satisfy conditions (i) and (ii). Moreover, for every $m=1,\dots,k-1$, the matricial polynomial $A_{m}(\zeta)$ corresponds to both $\sO_{S_m}(dm)[-D_m]$ and to $\sO_{S_m}(d(m-1))[-D_{m-1}]$, which proves (iii).

\section{Hyperk\"ahler metrics\label{hk}}

A Riemannian metric is called hyperk\"ahler if its holonomy is a subgroup of $Sp(n)$. Thus, a Riemannian manifold is hyperk\"ahler if it has a triple $I,J,K$ of complex structures, which behave algebraically like a basis of imaginary quaternions, and which are covariant constants for the Levi-Civita connection. We denote by $\omega_I,\omega_J,\omega_K$ the corresponding K\"ahler forms. 
\par
There is a corresponding notion of pseudo-hyperk\"ahler metrics in signature $(4p,4q)$.

\subsection{Twistor space}
A hyperk\"ahler structure on a manifold $M$ can be encoded in an algebraic object - the twistor space $Z$. As a manifold, $Z$ is $M\times S^2$, equipped with a complex structure, which is the standard one on $S^2\simeq \oP^1$, while on the fibre $Z\rightarrow (a,b,c)\in S^2$, it is the complex structure  $aI+bJ+cK$ of $M$. The natural projection $\pi:Z\rightarrow \oP^1$
is holomorphic and $M$ can be identified with a connected component of the space of sections of $\pi$, the normal bundle of which is the direct sum of $\sO(1)$-s and which are invariant under the antipodal map on $S^2$ (which induces an antiholomorphic involution $\sigma$ on $Z$). Such sections are called {\em twistor lines}. Finally, the K\"ahler forms of $M$ combine to define a twisted holomorphic symplectic form on the fibres of $Z$
\begin{equation}\Omega=\left(\omega_J+\sqrt{-1}\omega_K\right)+2\sqrt{-1} \omega_I\zeta+\left(\omega_J-\sqrt{-1}\omega_K \right)\zeta^2,\label{Omega}\end{equation}
where $\zeta$ is the affine coordinate of $\oP^1$. Thus, $\Omega$ is an $\sO(2)$-valued fibrewise symplectic form on $Z$. 

\subsection{K\"ahler potentials}
As remarked above, the complex-valued form $\Omega_I=\omega_J+\sqrt{-1}\omega_K$ is a holomorphic symplectic form for the complex structure $I$. The Darboux theorem holds for such forms and we can find a local $I$-holomorphic chart $u_i,z_i$, $i=1,\dots,n$ such that
\begin{equation} \Omega_I=\omega_J+\sqrt{-1}\omega_K=\sum_{i=1}^n du_i\wedge dz_i.\label{omega}\end{equation}
In this local chart, the K\"ahler form $\omega_I$ can be written as
%%%%
\begin{equation} \omega_I=\frac{\sqrt{-1}}{2} \sum_{i,j}\left(K_{u_i\bar{u}_j}du_i\wedge d\bar{u}_j+ K_{u_i\bar{z}_j}du_i\wedge d\bar{z}_j+ K_{z_i\bar{u}_j}dz_i\wedge d\bar{u}_j +K_{z_i\bar{z}_j}dz_i\wedge d\bar{z}_j\right), \label{omega1}\end{equation}
for a real-valued function $K$ (we write \eqref{omega1} with positive sign, as in our examples the K\"ahler potential is negative). We see that the complex structure $J$ is given by:
\begin{align} J\left(\frac{\partial}{\partial u_i}\right) =\sum_{j=1}^n\left (K_{z_i\bar{u}_j}\frac{\partial}{\partial \bar{u}_j} + K_{z_i\bar{z}_j}\frac{\partial}{\partial \bar{z}_j}\right)\notag\\
J\left(\frac{\partial}{\partial z_i}\right) =\sum_{j=1}^n\left (-K_{u_i\bar{z}_j}\frac{\partial}{\partial \bar{z}_j} - K_{u_i\bar{u}_j}\frac{\partial}{\partial \bar{u}_j}\right).\label{J} \end{align}
%%%%
Thus the condition $J^2=-1$ gives a system of nonlinear PDE's for $K$. This system is equivalent to the following condition:
\begin{equation}\begin{pmatrix} K_{u_i\bar{u}_j} & K_{u_i\bar{z}_j}\\ K_{z_i\bar{u}_j} & K_{z_i\bar{z}_j}\end{pmatrix} \in Sp(n,{\Bbb C}),\label{sp}\end{equation}
where the symplectic group is defined with respect to the form \eqref{omega}.
\par
 Conversely, suppose that in some local coordinate system $u_i,z_i$ we have a K\"ahler form $\omega_I$ given by a K\"ahler potential $K$ such that this system of PDE's is satisfied. Then, if we define $\omega_J+i\omega_K$ by the formula \eqref{omega}, we obtain a hyperhermitian structure. However $\omega_J$ and $\omega_K$ are closed, and so, by Lemma 4.1 in \cite{AH}, $J$ and $K=IJ$ are integrable and we have locally a hyperk\"ahler structure. Therefore there is 1-1 correspondence between K\"ahler potentials satisfying the above system of PDE's and local hyperk\"ahler structures.

\subsection{Twistor lines from a K\"ahler potential\label{twistor_lines}}
From the definition of the twistor space, the hyperk\"ahler structure is determined by the twisted form $\Omega$ and by a family of sections of $\pi:Z\rightarrow \oP^1$. Let $\zeta=0$ correspond to the complex structure $I$. We can trivialise the twistor space in a neighborhood of a point in $\pi^{-1}(0)$. Let $\zeta,U_1,\dots,U_n,$$Z_1,\dots,Z_n$ be local holomorphic coordinates, so that $\Omega=\sum_{i=1}^n dU_i\wedge dZ_i$ in these coordinates. A twistor line is now a $2n$-tuple of functions $U_i(\zeta),Z_i(\zeta)$. Since there is a unique twistor line passing through every point of $Z$, the functions $U_i(\zeta),Z_i(\zeta)$ are determined by their values $u_i,z_i$ at $\zeta=0$. It follows now from \eqref{Omega} that the K\"ahler form $\omega_I$, and hence the hyperk\"ahler metric, is determined by the values at $\zeta=0$ of first derivatives of $U_i(\zeta),Z_i(\zeta)$ with respect to $\zeta$. More precisely, if
$$ U_i(\zeta)=u_i+p_i\zeta+\dots,\quad Z_i(\zeta)=z_i+q_i\zeta+\dots,$$
then
$$\omega_I=-\frac{\sqrt{-1}}{2}\sum_{i=1}^n\left(du_i\wedge dq_i+dp_i\wedge dz_i\right).$$
Hence, it is enough to know the twistor lines only up to first order (once we trivialise the twistor space near $\zeta=0$).
\par
If $\omega_I$ is given by a K\"ahler potential $K=K(u_i,\bar u_i,z_i,\bar z_i)$, then, comparing the last formula with \eqref{omega1}, we conclude that, up to additive constants, $p_i=K_{z_i}$ and $q_i=-K_{u_i}$. The freedom of adding an arbitrary constant to each $p_i$ and $q_i$ can be incorporated into the choice of a K\"ahler potential, and so, the twistor lines are given, up to the first order, by:
\begin{equation} U_i(\zeta)=u_i+K_{z_i}\zeta+\dots,\quad Z_i(\zeta)=z_i-K_{u_i}\zeta+\dots.\label{lines}\end{equation}

\section{Generalised Legendre transform\label{glt}}

%\subsection{Introduction}
The generalised Legendre transform, invented by Lindstr\"om and Ro\v{c}ek \cite{LR}, is a construction of (pseudo-)hyperk\"ahler metrics whose twistor space admits a special type of Hamiltonians. 
It generalises the case of $4n$-dimensional hyperk\"ahler manifolds, the symmetry group of which has rank $n$. Recall that 
a tri-Hamiltonian action of a group $H$ on $M$, which extends to a holomorphic action of a complexification $H^\cx$ of $H$ for every complex structure, gives rise to a Hamiltonian $\sigma$-equivariant action of $H^\cx$ on the twistor space $Z$. The moment map is then a section of $ \fH^\cx\otimes \pi^\ast\sO(2)$, where $\fH$ is the Lie algebra of $H$.
\par
It happens however, quite often, that the twisted symplectic form $\Omega$ of a twistor space $Z$, of complex dimension $2n+1$, admits $n$ independent Poisson-commuting  sections $f_i:Z\rightarrow \pi^\ast\sO(2r_i)$, where $r_i$ are no longer constrained to be $1$.  Each $\sO(2r_i)$, $r_i\geq 1$, admits a canonical anti-holomorphic involution $\tau$, induced by that of $\sO(2)\simeq T \oP^1$, and each $f_i$ is assumed to satisfy $\tau\circ f_i=f_i\circ \sigma$ ($\sigma$ is the anti-holomorphic involution on $Z$ induced by the antipodal map of $S^2$). Such ``completely integrable" hyperk\"ahler manifolds $M$ of quaternionic dimension $n$ are produced by the generalised Legendre transform (GLT), which we proceed to describe.
\par
The maps $f_i$ induce maps $\hat{f}_i$ from the space of sections of $Z$, in particular from the manifold $M$,  to the space of sections of $\sO(2r_i)$, i.e. to the space of polynomials of degree $2r_i$, which we write as
$$ \alpha_i(\zeta)=\sum_{a=0}^{2r_i} w_a^i\zeta^a.$$
The real structure $\tau$ acts on this space by \begin{equation}\tau( w_a^i)=(-1)^{r_i+a}\overline{w^i_{2r_i-a}}\label{sigma}\end{equation} and, consequently, we obtain a map
$$ \hat{f}=\bigl(\hat{f}_1,\dots,\hat{f}_n\bigr):M \rightarrow \bigoplus_{i=1}^n {\Bbb R}^{2r_i+1}.$$ 
As explained in \cite{LR, HKLR}, $M$ is a torus (or another abelian group) bundle over the image of $\hat{f}$, where the dimension of the torus is equal to $\#\{i;r_i=1\}$. The image of $\hat{f}$ (and the  hyperk\"ahler structure of $M$) is, in turn,  determined by  a function $F: \bigoplus_{i=1}^n {\Bbb R}^{2r_i+1}\rightarrow {\Bbb R}$ satisfying the
system of PDE's:
\begin{equation} F_{w^i_a,w^j_b}=F_{w^i_c,w^j_d}\label{Feq}\end{equation}
for all $a,b,c,d$ such that $a+b=c+d$. 
\par
An equivalent characterization of \eqref{Feq} is that $F$ is given by a contour integral of a holomorphic (possibly singular or
multivalued) function of $2n+1$ variables $G=G(\zeta,\alpha_1,\dots,\alpha_n)$
\begin{equation}F(w_a^i)=\oint_c G\bigl(\zeta,\alpha_1(\zeta),\dots,\alpha_n(\zeta)\bigr)/\zeta^2 d\zeta\label{Fint} \end{equation}
where $\alpha_i(\zeta)=\sum_{a=0}^{2r_i} w_a^i\zeta^a$, or a sum of such contour integrals.
\par
The function $F$ determines the hyperk\"ahler structure as follows. We have local complex coordinates $z_1,\dots,z_n,u_1,\dots,u_n$ where
\begin{equation} z_i=w_0^i \label{F1}\end{equation}\begin{equation} F_{w_1^i}=\begin{cases} u_i &\text{if  $r_i\geq 2$} \\ u_i+\bar{u}_i &\text{if $r_i=1$}\end{cases} \label{F2}\end{equation}
\begin{equation}F_{w_a^i}= 0\quad \text{if $2\leq a\leq 2r_i-2$}.\label{F3}\end{equation}
\par
Then the K\"ahler potential defined by
\begin{equation}K=F-\sum(u_iw_1^i+\bar{u}_i\bar{w}_1^i)\label{potential} \end{equation}
satisfies the hyperk\"ahler Monge-Amp\`{e}re equations \eqref{sp} and defines the metric of $M$. 
Explicit formulae for the metric in terms of second derivatives of $F$ are given in \cite{LR}. We remark that the subset defined by \eqref{F3} is not necessarily a manifold and, hence, the image of $\hat{f}$ is only an open subset of \eqref{F3}. In addition, one usually needs several functions $F$ defined on overlapping regions of  $\bigoplus_{i=1}^n {\Bbb R}^{2r_i+1}.$
%\par $M$ is now realised as a torus bundle over a subset $X$ of $\bigoplus_{i=1}^n {\Bbb R}^{2r_i+1}$, with $X$ in turn being an open dense subset of the set where \eqref{F3} hold. The dimension of the torus is equal to $\#\{i;r_i=1\}$.

A simple computation shows also that (cf. \eqref{lines})
\begin{equation} \frac{\partial K}{\partial u_i}=-w_1^i,\quad  \frac{\partial K}{\partial z_i}= \frac{\partial F}{\partial w_0^i}.\label{dK}
\end{equation} 

\begin{remark} A given function $F$, satisfying \eqref{Feq}, defines a hyperk\"ahler metric on the set where \eqref{F3} holds {\em and} where the matrix 
\begin{equation}\bigl[F_{w^i_a,w^j_b}\bigr]_{\scriptscriptstyle 0< a < r_i ,\; 0<b<r_j}\label{F''}\end{equation} is invertible (see \cite{LR}). Thus, it is possible that the second condition fails at every point where \eqref{F3} holds, and we do not obtain a hyperk\"ahler metric from $F$.\label{nondeg}\end{remark}

\begin{example} We consider two examples of (non-generalised) Legendre transform in four dimensions. The first, a well known  one, is given by the function $F=2x^2-z\bar{z}$.
It produces the flat metric on $\oR^4$. The second one is the hyperk\"ahler metric obtained from a cubic harmonic polynomial on $\oR^3$:
$$ F=2x^3-3xz\bar{z}.$$
The Legendre transform produces a translation-invariant metric in $I$-holomorphic coordinates $z,u$, with $u+\bar{u}=F_x=6x^2-3z\bar{z}$. The K\"ahler potential for $\omega_I$ is 
$$K=F-xF_x=-4x^3=-\frac{4}{6^{3/2}} (u+\bar{u}+3z\bar{z})^{3/2}.$$
Up to a constant multiple, the metric is
$$ \frac{1}{x}\left(du d\bar u+3zdud\bar z + 3\bar{z} d\bar u dz +(6x^2+9z\bar z)dz d\bar z\right),$$
where $x=\sqrt{(u+\bar u+3z\bar z)/6}$.
It is defined on the subset $\{(u,z);\; u+\bar u+3z\bar z>0\}$ and is non-complete. In addition to the translational symmetry $u\mapsto u+it$, it possesses a circle symmetry $(u,z)\mapsto (u,e^{it}z)$. This hyperk\"ahler metric is 
non-flat. The simplest way to see this is to notice that the surface $z=0$ is a totally geodesic submanifold (since it is the fixed-point set of the circle symmetry). The metric on this surface is $ \sqrt{6}\frac{du d\bar u}{\sqrt{u+\bar u}}$, which is not flat.
\label{harm}\end{example}

\begin{remark} A very natural interpretation of hyperk\"ahler manifolds arising from GLT has been given by Dunajski and Mason \cite{DM2,DM} (see also \cite{GCH}. They show that these manifolds are precisely leaves of the natural hyperk\"ahler foliation of  {\em generalised hyperk\"ahler manifolds}, which have a genuine triholomorphic symmetry.
\end{remark}

\section{GLT on spectral curves\label{glt_spec}} 

As mentioned above, the function $G$ in \eqref{Fint} can be (and usually is) multivalued. Instead of dealing with such functions we can consider single-valued functions on some covering of $\oP^1$. For the time being, we assume that the twistor space $Z$ has a (locally surjective) projection 
\begin{equation} Z\rightarrow \bigoplus_{l=1}^k\bigoplus_{i=1}^{m_l} \sO(2i)\label{all_i},\end{equation}
equivariant with respect to antiholomorphic involutions, so that the induced map on real sections is
\begin{equation}\hat{f}:M\rightarrow \bigoplus_{l=1}^k\bigoplus_{i=1}^{m_l} \oR^{2i+1}.\label{hat_f}\end{equation}
For every $l$, we identify an element of $\bigoplus_{i=1}^{m_l} \oR^{2i+1}$ with a curve $S_l$ in the linear system $|\sO(2m_l)|$, given by the equations
\begin{equation} P_l(\zeta,\eta)=\eta^{m_l}+\sum_{i=1}^{m_l} \alpha_{i}(\zeta)\eta^{m_l-i}=0,\label{curve}\end{equation}
where $\alpha_i(\zeta)=\alpha_i^l(\zeta)$ is a polynomial of degree $2i$  invariant under  \eqref{sigma} and $\eta$ is the fibre coordinate in $\sO(2)\simeq TP^1$. We identify points of $\bigoplus_{l=1}^k\bigoplus_{i=1}^{m_l} \oR^{2i+1}$ with the set $\sS=\sS(m_1,\dots,m_k)$ of $\tau$-invariant singular curves $S=S_1\cup \dots\cup S_k$ satisfying the equation \begin{equation}\prod_{l=1}^k P_l(\zeta,\eta)=0.\label{prod}\end{equation}
\par
We now assume that the function $G$ of \eqref{Fint} can be lifted to  a meromorphic function $G(\zeta,\eta)$ on the singular curve $S=S_1\cup \dots\cup S_k$, and, similarly, the cycle $c$ is also defined on $S$. Thus, the function $F:\sS\rightarrow {\Bbb R}$ is defined by 
\begin{equation}F=\sum_{p=1}^q\oint_{c_p} \frac{1}{\zeta^2}G_p(\zeta,\eta)d\zeta,\label{p}\end{equation} where each $G_p$ is a meromorphic function on $\oT=TP^1$ and $c_p$ is  a homology cycle on the singular curve $S$.  A sufficient condition for $F$ to be real is given by (cf. \cite{HMR}):
\begin{lemma} A function $F$ defined by \eqref{p} is real, provided each $G_p$ and $c_p$ satisfy the following two conditions:
\begin{equation} \oint_{c_p+\tau_\ast c_p} G_p(\zeta,\eta)\frac{d\zeta}{\zeta^2}=0,\end{equation}
\begin{equation}\overline{G_p(\zeta,\eta)}=-\bar{\zeta}^2G_p\left(-\frac{1}{\bar{\zeta}},-\frac{\bar{\eta}}{\bar{\zeta}^2}\right).\label{Greal}\end{equation}
\label{realG}\end{lemma}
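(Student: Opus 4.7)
The plan is to show $F=\bar F$ by a direct computation performed summand by summand: take the complex conjugate of each contour integral, apply the reality condition \eqref{Greal} to rewrite the conjugated integrand as the $G_p$ of the $\tau$-image of the original point, re-interpret the result as an integral over $\tau_\ast c_p$, and finally apply the cycle hypothesis to relate this back to the integral over $c_p$.

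Concretely, I would parametrize $c_p\subset S$ by $t\mapsto(\zeta(t),\eta(t))$ and write explicitly
\[
\overline{\oint_{c_p}\frac{G_p(\zeta,\eta)}{\zeta^2}\,d\zeta}=\int\frac{\overline{G_p(\zeta(t),\eta(t))}}{\overline{\zeta(t)}^{\,2}}\,\overline{\zeta'(t)}\,dt.
\]
Substituting \eqref{Greal} replaces $\overline{G_p(\zeta(t),\eta(t))}$ by $-\overline{\zeta(t)}^{\,2}\,G_p(\tilde\zeta(t),\tilde\eta(t))$, where $(\tilde\zeta,\tilde\eta):=\tau(\zeta,\eta)=(-1/\bar\zeta,-\bar\eta/\bar\zeta^{\,2})$, and the $\overline{\zeta(t)}^{\,2}$ in the numerator cancels the one in the denominator. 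Since $S$ is $\tau$-invariant, the path $t\mapsto(\tilde\zeta(t),\tilde\eta(t))$ parametrizes $\tau_\ast c_p$ on $S$. A short chain-rule computation, using $\tilde\zeta(t)^2=1/\overline{\zeta(t)}^{\,2}$, gives $\tilde\zeta'(t)/\tilde\zeta(t)^{\,2}=\overline{\zeta'(t)}$, which is the crucial identity showing that the meromorphic differential $d\zeta/\zeta^2$ is the right $\tau$-compatible measure on $\oT$. Assembling the pieces yields
\[
\overline{\oint_{c_p}\frac{G_p(\zeta,\eta)}{\zeta^2}\,d\zeta}=-\oint_{\tau_\ast c_p}\frac{G_p(\zeta,\eta)}{\zeta^2}\,d\zeta.
\]

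Summing over $p$ and then invoking the cycle hypothesis $\oint_{c_p+\tau_\ast c_p}G_p\,d\zeta/\zeta^{\,2}=0$, which gives $\oint_{\tau_\ast c_p}G_p\,d\zeta/\zeta^{\,2}=-\oint_{c_p}G_p\,d\zeta/\zeta^{\,2}$, converts the right-hand side back into $\sum_p\oint_{c_p}G_p\,d\zeta/\zeta^{\,2}=F$, so $\bar F=F$ as required. The only step that is not purely formal is verifying $\tilde\zeta'/\tilde\zeta^{\,2}=\overline{\zeta'}$; everything else amounts to bookkeeping, and the conditions \eqref{Greal} and the cycle condition have evidently been set up precisely so that these two manipulations dovetail.
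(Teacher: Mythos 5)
Your argument is correct and is essentially the paper's own proof written out in full: the paper merely observes that \eqref{Greal} says $\overline{L_p}=-\tau^\ast L_p$ for the differential $L_p=G_p(\zeta,\eta)\,d\zeta/\zeta^2$, and then combines this with the cycle condition to conclude that $\oint_{c_p}L_p$ is real. Your explicit parametrisation, and in particular the verification that $\tilde\zeta'/\tilde\zeta^{\,2}=\overline{\zeta'}$ (which is exactly why the factor $\zeta^{-2}$ appears in the integrand), is just the unpacked form of that one-line observation, so there is nothing to add.
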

\begin{proof} Let us write $L_p$ for the differential in $p$-th term, so that $F=\sum\oint_{c_p}L_p$. Observe that the second condition implies that $\overline{L_p}= -L_p\circ\tau$, which, together with the first condition, shows that $\oint_{c_p}L_p$ is real.\end{proof}
\par
We now discuss constraints \eqref{F3} for an $F$ of the form \eqref{p}.  Recall, that for a smooth curve $S_l$ given by the polynomial \eqref{curve}, a basis of $H^0(S_l,\Omega^1)$ is given by the forms
\begin{equation}\omega_{rs}=\frac{\zeta^r\eta^{s}}{\partial P/\partial\eta}d\zeta\quad 0\leq s\leq m_l-2, \enskip 0\leq r\leq 2(m_l-2)-2s.\label{DFK}\end{equation}
%%%%%%%%%%%%%%%%%%%%
 Let $c$ be a  cycle  on a singular curve $S=S_1\cup\dots\cup S_k$. When restricted to a component $S_l$, $c$ becomes a chain $\gamma_l$ - the sum of a cycle on $S_l$  and oriented paths between intersection points of $S_l$ with other $S_j$.
Let $w^i_a$ be a coefficient of the polynomial $P_l(\zeta,\eta)$. We compute the derivatives $F_{w^i_a}$ at a point $S\in S$, where the component $S_l$ is nonsingular:
%%%%%%%%%%%%
$$ \frac{d}{dw_a^i}\oint_c G(\zeta,\eta)\frac{d\zeta }{\zeta^2}=\oint_{c} \frac{1}{\zeta^2}\frac{\partial G}{\partial\eta}\frac{d\eta}{dw_a^i}d\zeta  = \int_{\gamma_l} \frac{1}{\zeta^2}\frac{\partial G}{\partial\eta}\frac{d\eta}{dw_a^i}d\zeta,$$
 and
%%%%%%%%%%%%%%%%
\begin{equation}\int_{\gamma_l} \frac{1}{\zeta^2}\frac{\partial G}{\partial\eta}\frac{d\eta}{dw_a^i}d\zeta=-\int_{\gamma_l} \frac{\partial G}{\partial\eta}\frac{\zeta^{a-2}\eta^{m_l-i}}{\partial P_l/\partial\eta}d\zeta,\label{der}\end{equation}
where we computed $d\eta/dw_a^i$ by implicit differentiation of \eqref{curve}:
$$\frac{d\eta}{dw_a^i}=-\frac{\zeta^a\eta^{m_l-i}}{\partial P_l/\partial\eta},$$
and, comparing with \eqref{DFK}, we see that for $2\leq a\leq 2i-2$ 
\begin{equation} \frac{d}{dw_a^i}\oint_c G(\zeta,\eta)\frac{d\zeta }{\zeta^2}=-\int_{\gamma_l} \frac{\partial G}{\partial\eta}\omega_{a-2,m_l-i}.\label{d/dw}\end{equation}
%%%%%%%%%%%%%
Thus, the collection of derivatives $\bigl(\frac{d}{dw_a^i}\oint_c G(\zeta,\eta)\frac{d\zeta }{\zeta^2}\bigr)_{a=2,\dots, 2i-2}$ can be viewed as an element of $H^0(S_l,\Omega^1)^\ast$, and so it defines an element of $\Jac^0(S_l)$. Consequently, the constraints \eqref{F3} for $F$ of the form \eqref{p} imply triviality of certain line bundles on each $S_l$.

\begin{remark} The hyperk\"ahler structures obtained from \eqref{p} have a local $\oR^k$-symmetry.\end{remark}

\subsection{A generalisation\label{gen}}

So far, we have assumed that the twistor space has a locally surjective projection $$ Z\rightarrow \bigoplus_{l=1}^k\bigoplus_{i=1}^{m_l} \sO(2i),$$ so that the induced map on real sections is
\begin{equation*}\hat{f}:M\rightarrow \bigoplus_{l=1}^k\bigoplus_{i=1}^{m_l} \oR^{2i+1},\end{equation*}
and its image has  a non-empty interior. The general form of the GLT, as discussed in \S\ref{glt}, applies also to affine subspaces of $V=\bigoplus_{l=1}^k\bigoplus_{i=1}^{m_l} \oR^{2i+1}$, where we fix the components belonging to some of the $\oR^{2i+1}$. Thus, for any  $R\subset \{(i,l); l=1,\dots,k,\enskip i=1,\dots,m_l\}$, let $p_R$ be the projection of $V$ onto $X_R=\bigoplus_{(i,l)\in R} \oR^{2i+1}$. For any $x\in X_R$, we can apply the GLT to the restriction of a given function $F$ on $V$ to the affine subspace $p_R^{-1}(x)$. Once again, we obtain a pseudo-hyperk\"ahler structure, provided that the matrix \eqref{F''} is invertible.
\par
If $V$ was interpreted as a space of spectral curves, then so is $p_R^{-1}(x)$: some of the coefficients $\alpha_i(\zeta)$ in \eqref{curve} are now fixed. The constraints \eqref{F3} are still equivalent to \eqref{d/dw}, but only with respect to a subspace of the space of holomorphic differentials. 
\par
This generalisation has a simple interpretation in terms of {\em twistor quotients} introduced in \cite{TQ}. It was shown there that that the hyperk\"ahler metrics obtained from the GLT correspond to twistor spaces which admit a compatible fibrewise action of $\sG=\oplus\sO(-2i+2)$. The above passage from GLT on $V$ to GLT on  $p_R^{-1}(x)$ should be viewed as a twistor quotient with respect to a subgroup of $\sG$. The projection $p_R$ is the moment map for this subgroup and $x_R$ is a particular level set for taking the twistor quotient.

\section{Hyperk\"ahler metrics of monopole type \label{hmon}}

We continue to discuss hyperk\"ahler metrics obtained via the generalised Legendre transform from an $F:\bigoplus_{l=1}^k\bigoplus_{i=1}^{m_l} \oR^{2i+1}\rightarrow \oR$. 
As observed in the last section, a rather mild assumption that $F$ arises from a contour integral on a covering of $\oP^1$ defined by \eqref{prod}, implies that the image of $\hat{f}:M\rightarrow \sS$ consists of spectral curves on which certain line bundles are trivial. A well-known example of this situation is the natural metric on the moduli space of charge $n$ $SU(2)$-monopoles, which arises via GLT from the following function \cite{IR, Hough}:
\begin{equation}-\frac{1}{2\pi i}\oint_{\tilde 0} \frac{\eta^2}{\zeta^3}d\zeta + \oint_c \frac{\eta}{\zeta^2}d\zeta,\label{monopole}\end{equation}
where $\tilde 0$ is the sum of simple contours around points in the fibre $\pi_{|S}^{-1}(0)$.
Here $k=1$, i.e. $S=S_1$, $m_1=n$, and the constraints \eqref{F3} correspond (as easily seen from \eqref{d/dw}) to the triviality on $S_1$ of the line bundle $L^2$ with transition function $\exp(2\eta/\zeta)$. Moreover, in this case the twistor lines are determined by sections of $L^2$: essentially the twistor space can be (locally) trivialised near $\zeta=0$, as in section \ref{twistor_lines}, where  $Z_i(\zeta)$ are roots of $P_1(\zeta,\eta)=0$ and $U_i(\zeta)$ is the logarithm of the value of a section of $L^2$ at the point $(\zeta,Z_i(\zeta))\in S_1$. We shall now consider hyperk\"ahler metrics, the twistor lines of which admit a similar description.

We begin by generalising \eqref{monopole}. We want a function $F:\sS=\sS(m_1,\dots,m_k)\rightarrow \oR$. Recall that a point of $\sS$ corresponds to a reducible curve $S=S_1\cup\dots \cup S_k$ given by \eqref{prod}. Let $H_l(\zeta,\eta)$, $l=1,\dots,k$ be a meromorphic function on $\oT$ which is a linear combination of monomials $\eta^i/\zeta^j$, $i,j>0$. Denote by $\tilde 0_l$, $l=1,\dots,k$, the sum of simple contours around points in the fibre $\pi_{|S_l}^{-1}(0)$ of $S_l$, and, finally, let $c$ be a homology cycle on $S$. We define an $F$ on $\sS$ by 
\begin{equation} \oint_c \frac{\eta}{\zeta^2}d\zeta-\frac{1}{2\pi i}\sum_{l=1}^k \oint_{\tilde 0_l} \frac{1}{\zeta^2}H_l(\zeta,\eta)d\zeta.\label{Fbundle}\end{equation} 
For this function, \eqref{d/dw} becomes:
\begin{equation}\frac{dF}{dw_a^i}=\Res_{\tilde 0_l} \frac{\partial H_l}{\partial \eta}\omega_{a-2,m_l-i} - \int_{\gamma_l} \omega_{a-2,m_l-i}, \enskip 2\leq a\leq 2i-2,\label{int-int}\end{equation}
 where $\gamma_l$ is the restriction of $c$ to $S_l$.
%%%%%%%%%%%%%%%%
The second term in this equation defines an abelian sum, which is identified with $\bigl[\Delta_l^+-\Delta_l^-\bigr]$ in the Jacobian of $S_l$, where $\Delta_l^-$ are the points at which $c$ enters $S_l$ from other curves and  $\Delta_l^+$ the points at which $c$ leaves $S_l$. Thus, via Serre's duality, we have:
\begin{corollary} Let $F:\bigoplus_{l=1}^k\bigoplus_{i=1}^{m_l} \oR^{2i+1}\rightarrow \cx$ be of the form \eqref{Fbundle} and let $S=S_1\cup\dots\cup S_k$ be such that each $S_l$ is nonsingular. Let $E_l$, $l=1,\dots,k$, be the line bundle on $\oT$ with transition function $\exp\frac{\partial H_l}{\partial \eta}$. If the equations \eqref{F3} are satisfied at $S$, then
\begin{equation}{ E_l}_{|S_l}\simeq \left[\Delta_l^+-\Delta_l^-\right].\label{triviality}\end{equation}
on each $S_l$.\hfill $\Box$
\label{answer0}
\end{corollary}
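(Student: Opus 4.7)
The plan is to read the corollary as the abelian-sum identity in $\Jac^0(S_l)$ obtained by expanding the constraint $F_{w_a^i}=0$ in a basis of holomorphic differentials, recognising the left-hand residue as the Serre-duality pairing with the class of $E_l|_{S_l}$ in $H^1(S_l,\sO_{S_l})$.

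First I would note that for fixed $l$ the differentials $\omega_{a-2,m_l-i}$ that appear in \eqref{int-int} are precisely, as $i$ runs through $2,\dots,m_l$ and $a$ through $2,\dots,2i-2$, the basis \eqref{DFK} of $H^0(S_l,\Omega^1)$. So under the hypothesis \eqref{F3} the identity
\[
\Res_{\tilde 0_l}\!\left(\tfrac{\partial H_l}{\partial\eta}\,\omega\right)\;=\;\int_{\gamma_l}\omega
\]
holds for \emph{every} $\omega\in H^0(S_l,\Omega^1)$. The conclusion is then an equality of linear functionals on $H^0(S_l,\Omega^1)$, i.e.\ an equality in $H^0(S_l,\Omega^1)^\ast/H_1(S_l,\Z)=\Jac^0(S_l)$.

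Next I would identify the two sides with classical geometric objects. For the right-hand side, the chain $\gamma_l\subset S_l$ has boundary $\Delta_l^+-\Delta_l^-$ by the definitions of $\Delta_l^\pm$ just before the corollary, so its Abel--Jacobi class in $\Jac^0(S_l)$ is exactly $[\Delta_l^+-\Delta_l^-]$. For the left-hand side, $E_l$ is defined on $\oT$ by the cocycle $\partial H_l/\partial\eta$ (with respect to the cover $U_0,U_\infty$), and by Proposition \ref{all} its restriction to $S_l$ is represented by the same Laurent polynomial in $\eta^i\zeta^{-j}$ as a class in $H^1(S_l,\sO_{S_l})$. Under the Serre duality pairing $H^1(S_l,\sO_{S_l})\otimes H^0(S_l,\Omega^1)\to\cx$, a Čech cocycle $q$ for the cover $\{U_0\cap S_l,U_\infty\cap S_l\}$ (holomorphic away from $\pi_{|S_l}^{-1}(0)$) pairs with $\omega$ exactly as $\sum \Res(q\,\omega)$ at those points of $S_l$ lying over $\zeta=0$, which is the contour integral $\frac{1}{2\pi i}\oint_{\tilde 0_l} q\,\omega$. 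So the left-hand side is the Serre pairing $\langle [E_l|_{S_l}],\omega\rangle$.

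Combining these two identifications, the equality of pairings for every $\omega$ in a basis of $H^0(S_l,\Omega^1)$ forces $[E_l|_{S_l}]=[\Delta_l^+-\Delta_l^-]$ in $\Jac^0(S_l)$, which is \eqref{triviality}. The main conceptual obstacle is really only the second step: making sure that the Čech representative $\partial H_l/\partial\eta$ truly computes $[E_l|_{S_l}]$ under Serre duality by the residue formula, and that the apparent dependence of $\int_{\gamma_l}\omega$ on the choice of chain (as opposed to its boundary) is absorbed by passing to the quotient by periods, so that both sides genuinely lie in $\Jac^0(S_l)$. Everything else is the bookkeeping already done in \eqref{int-int} together with Propositions \ref{T} and \ref{all}.
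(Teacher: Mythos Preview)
Your proposal is correct and follows essentially the same route as the paper: the paper derives \eqref{int-int}, identifies the integral term as the abelian sum $[\Delta_l^+-\Delta_l^-]$ in $\Jac^0(S_l)$, and then invokes Serre duality to read off \eqref{triviality}. You have simply made the Serre-duality step more explicit by spelling out the residue pairing with the \v{C}ech cocycle $\partial H_l/\partial\eta$, which is helpful detail but not a different argument.
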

%%%%%%%%%%%%%%%%%%%%%%%%
\begin{remark}An $F$ of the form \eqref{Fbundle} will give a hyperk\"ahler metric only if it is real-valued.
It follows from  Lemma \ref{realG} that the first term is real provided $\tau_\ast c=-c$, while the remaining ones are real if each $H_l$ satisfies \eqref{Greal}. This condition is easily seen to be equivalent to each $H_l$ being a sum of 
 $$ \frac{\eta^i}{\zeta^j}+\frac{\eta^i}{\zeta^{2i-j-2}},\quad i>0,\enskip 0<j<2i-2,\enskip \text{$i+j$ is odd}.$$\label{real_sF}\end{remark}
%%%%%%%%%%%%%%%%%%
%%%%%%%%%%%%%%%%%%%%%%%5
\begin{remark} The expression \eqref{int-int} can be further simplified. Observe namely, that the $(l+1)$-st term in \eqref{Fbundle} is simply $-\Res_{\tilde 0_l} H_l(\zeta,\eta)/\zeta^2$ and so it can be expressed as a symmetric function of the roots $\eta_1,\dots,\eta_{n}$ of $ P_l(\zeta,\eta)$. Hence, this term in \eqref{Fbundle} can be written as a function $\tilde{H}(w^i_a)$ of the coefficients of $P_l$, which makes computing its derivatives trivial. For example, in the case of $F$ given by \eqref{monopole}, one computes easily that the first term is equal to 
$$-\Res_{\tilde 0} \eta^2/\zeta^3=-\Res_0 \frac{\alpha_1(\zeta)^2-2\alpha_2(\zeta)}{\zeta^3},
$$
where $\alpha_1(\zeta), \alpha_2(\zeta)$ are the coefficients in the polynomial $P(\zeta,\eta)=\eta^n +\sum_{i=1}^n \alpha_i(\zeta)\eta^{n-i}$ defining a spectral curve $S$.
Thus, the derivative with respect to $w_a^i$ ($2i-2\geq a\geq 2$)  is $0$ unless $i=2$, and comparing with \eqref{int-int}, we see, as in  \cite{HMR}, that \eqref{F3} are satisfied at $S$ if and only if  
\begin{equation}\int_{c} \omega_{rs}=\begin{cases}2 & \text{if $r=0$ and $s=n-2$}\\ 0 & \text{otherwise}.\end{cases}\label{02}\end{equation}
\label{tildeH}\end{remark}

Observe now that the reality property of each $H_l$ translates into the following reality property of $E_l$:
\begin{equation} E_l\simeq \overline{\tau^\ast E_l},\label{tau^ast}\end{equation}
where ``bar" means taking the opposite complex structure. We can now identify (at least locally) the (hyperk\"ahler) manifold $M$, obtained from \eqref{Fbundle}, with the set of $(S,\nu)$, where $S=S_1\cup\dots\cup S_k$ is a reducible curve such that \eqref{triviality} holds for $l=1,\dots,k$, and $\nu=(\nu_1,\dots,\nu_k)$ with each $\nu_l$ a  section of ${E_l}_{|S_l}\otimes\left[\Delta_l^--\Delta_l^+\right]$ satisfying $\nu_l\overline{\tau^\ast \nu_l}=1\in H^0(S,\sO)$ ($\overline{\tau^\ast \nu_l}$ is a section of ${E_l}^\ast_{|S_l}\otimes\left[\Delta_l^+-\Delta_l^-\right]$). Multiplying each $\nu_l$ by a complex number of modulus one realises $M$ as a $T^k$-bundle over the image of \eqref{hat_f}.
\par
 We claim that the metric on $M$ can be described as for monopole metrics. Let us represent each $\nu_l$ by a pair of functions $f_0^l(\zeta,\eta)$ on $S\cap U_0$ and $f_\infty^l(\tilde \zeta,\tilde \eta)$ on $S\cap U_\infty$  ($U_0=\{\zeta\neq \infty\}$, $U_\infty=\{\zeta\neq 0\}$) satisfying  $f_\infty^l(\tilde \zeta,\tilde \eta)=\exp\frac{\partial H_l}{\partial \eta}f_0^l(\zeta,\eta)$ over $\zeta\neq 0,\infty$. Let $\bigl(\zeta,\eta^l_j(\zeta)\bigr)$, $j=1,\dots,m_l$, be the points of $S_l$ over $\zeta$. For $\zeta$ near $0$ define a $\zeta$-dependent complex-symplectic form by:
\begin{equation}\Omega(\zeta)=\sum_{l=1}^k\sum_{j=1}^{m_l}\frac{df_0^l\bigl(\zeta,\eta_j^l(\zeta)\bigr)}{f_0^l\bigl(\zeta,\eta_j^l(\zeta)\bigr)}\wedge d\eta_j^l(\zeta).\label{Omega2}\end{equation}
We claim 
\begin{theorem} On the subset of $M$, where the fibre of $S$ over $\zeta=0$ consists of distinct points and each $S_l$ is smooth, local $I$-complex coordinates ($I$ is the complex structure corresponding to $\zeta=0$) are given by 
$\eta^l_j(0),\log f_0^l\bigl(0,\eta^l_j(0)\bigr)$, $l=1,\dots,k$, $j=1,\dots,m_l$. Moreover, the complex-symplectic form $\omega_J+i\omega_K$ is equal to $\Omega(0)$, while the K\"ahler form $\omega_I$ is equal to $\frac{1}{2\sqrt{-1}\frac{d\Omega(\zeta)}{d\zeta}}_{|\zeta=0}$.
\label{one}\end{theorem}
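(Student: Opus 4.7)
The plan is to mimic the analysis of \S\ref{twistor_lines}, working in the coordinates naturally provided by the identification of $M$ with pairs $(S,\nu)$ from the paragraph preceding the theorem.

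\emph{Step 1: a local trivialization of $Z$.} Near a point in $\pi^{-1}(0)$ lying on the twistor line of $(S,\nu)\in M$, with $\pi|_S^{-1}(0)$ consisting of distinct points and each $S_l$ smooth, one lets the pair $(S,\nu)$ vary holomorphically with $\zeta$. The resulting functions $\bigl(\zeta,\;\eta_j^l(\zeta),\;\log f_0^l(\zeta,\eta_j^l(\zeta))\bigr)$, $l=1,\dots,k$, $j=1,\dots,m_l$, form a local holomorphic chart on $Z$. Evaluation at $\zeta=0$ yields $2\sum m_l$ holomorphic functions on $M$ for the complex structure $I$, namely the asserted $I$-holomorphic coordinates. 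The second and third claims follow once the twistor symplectic form, restricted to the $\zeta$-fibre in this trivialization, is identified with $\Omega(\zeta)$ of \eqref{Omega2}: the decomposition \eqref{Omega} gives
\begin{equation*}
\omega_J+\sqrt{-1}\,\omega_K=\Omega(0),\qquad 2\sqrt{-1}\,\omega_I=\frac{d\Omega(\zeta)}{d\zeta}\bigg|_{\zeta=0}.
\end{equation*}

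\emph{Step 2: $\Omega(\zeta)$ equals the twistor symplectic form.} This is the heart of the argument. The GLT provides Darboux coordinates $(U_i,Z_i)$ with $\Omega(\zeta)=\sum dU_i\wedge dZ_i$, where the $Z_i$ are essentially the coefficients of the polynomials $P_l(\zeta,\eta)$ defining $S_l$ and the $U_i$ are determined by first $\zeta$-derivatives of $F$ through \eqref{F1}--\eqref{F2} and \eqref{dK}. I would verify $\sum dU_i\wedge dZ_i=\Omega(\zeta)$ by computing both sides in the coefficient variables $w^i_a$, using implicit differentiation of \eqref{curve} (as in \eqref{der}) together with the identification $f_\infty^l=\exp(\partial H_l/\partial\eta)\,f_0^l$ of the transition function of $E_l$. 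The monopole case ($k=1$, $H_1=\eta^2/\zeta$) treated in \cite{IR,Hough} provides the template. Reality of $\Omega(\zeta)$ under the twistor antipodal involution is guaranteed by Remark \ref{real_sF} together with $\tau_\ast c=-c$ and the reality constraint $\nu_l\,\overline{\tau^\ast\nu_l}=1$.

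\emph{Main obstacle.} The principal difficulty lies in Step 2: tracking cleanly how the contour-integral contribution along $c$ and the residue contributions at $\tilde 0_l$ in the definition of $F$ combine, after implicit differentiation on the reducible curve $S=S_1\cup\dots\cup S_k$, to reproduce the residue-free logarithmic form $\Omega(\zeta)$ for arbitrary admissible $H_l$ and general $k$. One must carefully handle the oriented paths in $\gamma_l$ between intersection points of distinct components, since these generate the divisor difference $[\Delta_l^+-\Delta_l^-]$ that appears in \eqref{triviality}. Once the identification is made, the assumption of distinct fibres over $\zeta=0$ ensures the nondegeneracy of $\Omega(0)$ in the chosen coordinates, completing the argument.
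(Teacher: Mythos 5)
Your overall strategy is the paper's own: identify the Darboux coordinates produced by the GLT from \eqref{Fbundle} with the spectral data $\bigl(\eta^l_j,\log f_0^l\bigr)$, following the Ivanov--Ro\v{c}ek/Houghton template, and then read off $\omega_J+\sqrt{-1}\omega_K$ and $\omega_I$ from the $\zeta$-expansion \eqref{Omega} of the twisted form. However, the ``main obstacle'' you flag in Step 2 is not mere bookkeeping: it conceals the two actual ideas of the proof, neither of which appears in your plan. The identity $\Omega(\zeta)=\sum dU_j\wedge d\alpha_j$ with $U_j$ as in \eqref{U} is just the chain rule; what has to be proved is that these $U_j$ satisfy the GLT relations \eqref{F1}--\eqref{F3} and \eqref{dK} for the specific function \eqref{Fbundle}, i.e.\ formula \eqref{crux}. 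Implicit differentiation in the $w^i_a$ will not produce this, because $\log f_0^l$ is multivalued. The necessary tool is a reciprocity computation on the cut surface: one cuts each $S_l$ along a canonical basis $a_r,b_r$ and arcs $\epsilon_i$ to the support of $(\nu_l)$, chooses a single-valued branch of $\log f_0$ on the resulting polygon $P$, and evaluates $\oint_{\partial P}\log f_0\cdot\psi$ for $\psi=\frac{\partial\eta}{\partial\alpha_j}\frac{d\zeta}{\zeta^s}$ as in Griffiths--Harris. This converts the Taylor coefficients of $U_j$ at $\zeta=0,\infty$ into derivatives of a contour integral over the specific chain $\gamma=\lambda+\sum_i\ord_{p_i}(f_0)\epsilon_i$ of \eqref{lambda}--\eqref{gamma}, whose cycle part is built from the periods of $d\log f_0$.

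The second missing step is specific to Theorem \ref{one} (as opposed to its converse, Theorem \ref{two}): the chain $\gamma$ produced by the computation above is determined by the section $\nu$, whereas the theorem starts from a \emph{prescribed} cycle $c$ in \eqref{Fbundle}, so one must show that the function $F'=\sum_l\phi(S_l)$ one constructs coincides with the given $F$. The paper does this via the reality conditions: with $\tau_\ast a_r=-a_r$, $\tau_\ast b_r=b_r$ and $\nu_l\overline{\tau^\ast\nu_l}=1$, the period $\oint_{a_r}d\log f_0$ is both real and an integer multiple of $2\pi\sqrt{-1}$, hence zero, so the cycle parts of both $\gamma_l$ and $c|_{S_l}$ are combinations of the $a_r$ alone; the constraints \eqref{F3}, read through \eqref{int-int}, then equate their periods against every holomorphic differential on $S_l$, forcing the cycles to be equal. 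Without this, the coordinates you write down in Step 1 are not known to be the GLT coordinates of the metric defined by \eqref{Fbundle}. Finally, note that the correction terms $R(\Delta^+,\Delta^-)$ in \eqref{crux} cancel only after summing over all components $S_l$; this cancellation is needed to obtain the clean statement on the reducible curve.
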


We shall prove this theorem together with its inverse, which we now state. 
Let $E_1,\dots,E_k$ be line bundles on $\oT$ satisfying the reality property \eqref{tau^ast}. Let $V$ be an open subset of $\sS=\sS(m_1,\dots,m_k)$ such that all $S\in V$ are isotopic, $S_l$ are smooth, and no intersection points lie over $\zeta=0$ or $\zeta=\infty$.
Suppose that to any $S\in V$ and any $l=1,\dots,k$ we associated disjoint divisors $\Delta_l^-$ and $\Delta_l^-=\tau(\Delta_l^+)$ which are subdivisors of the divisor cut out on $S_l$ by other $S_i$.
%%%%%%%%%%%%%%%
\begin{theorem} Suppose that $M$ is a hyperk\"ahler manifold, which as a manifold is the set of $(S,\nu)$, where $S=S_1\cup\dots\cup S_k \in V$ satisfy the above assumptions and \eqref{triviality} holds for $l=1,\dots,k$, and $\nu=(\nu_1,\dots,\nu_k)$ with each $\nu_l$ a  section of ${E_l}_{|S_l}\otimes\left[\Delta_l^--\Delta_l^+\right]$ satisfying $\nu_l\overline{\tau^\ast \nu_l}=1\in H^0(S,\sO)$. Suppose also that the twistor space of $M$ is trivialised near $\zeta=0$ so that the twisted symplectic form is given by \eqref{Omega2}.
\par
Then there exists a homology cycle $c$ on $S$, with $\tau_\ast c=-c$, entering each $S_l$ at points of $\Delta_l^-$ and leaving it at points of $\Delta_l^+$, and such that the hyperk\"ahler metric of $M$ is produced by the generalised Legendre transform applied to the function \eqref{Fbundle}.
\label{two}\end{theorem}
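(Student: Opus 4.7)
The plan is to explicitly construct the function $F$ and the cycle $c$ whose existence the theorem asserts, and then invoke Theorem \ref{one} to identify the generalised Legendre transform of this $F$ with the hyperk\"ahler structure on $M$.

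First, I would extract the data $(H_l,c)$ from the given geometric data. Since each $E_l$ has $c_1=0$ and satisfies the reality condition \eqref{tau^ast}, Proposition \ref{T} together with Remark \ref{real_sF} let me write its transition function as $\exp(q_l)$ with $q_l=\partial H_l/\partial\eta$ for an $H_l$ of the form prescribed in \eqref{Fbundle} satisfying \eqref{Greal}. The cycle $c$ is produced topologically on the singular curve $S=S_1\cup\cdots\cup S_k$: since the $\Delta_l^{\pm}$ are disjoint subdivisors of the intersection divisor of $S_l$ with the other components, I can take arcs inside each smooth $S_l$ with boundary $\Delta_l^+-\Delta_l^-$ and glue them across intersection points into a global $1$-cycle, using $\Delta_l^-=\tau(\Delta_l^+)$ to arrange $\tau_\ast c=-c$. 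Defining $F$ by \eqref{Fbundle}, Lemma \ref{realG} guarantees that $F$ is real.

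Next, I would verify that the constraints \eqref{F3} for this $F$ are satisfied at every point of $M$. This is precisely the computation of Corollary \ref{answer0} run in reverse. By \eqref{int-int}, for $2\leq a\leq 2i-2$ the derivative $F_{w_a^i}$ at $S\in M$ equals, up to sign, the value of the Serre-duality pairing between the class of ${E_l}_{|S_l}$ in $H^1(S_l,\sO_{S_l})$ and the differential $\omega_{a-2,m_l-i}$, minus the Abel sum of that differential along $\Delta_l^+-\Delta_l^-$. Since the $\omega_{rs}$ form a basis of $H^0(S_l,K_{S_l})$, the simultaneous vanishing of all such derivatives with $l$ fixed is equivalent to the isomorphism ${E_l}_{|S_l}\simeq[\Delta_l^+-\Delta_l^-]$ in $\Jac^0(S_l)$, which is precisely the hypothesis \eqref{triviality}.

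It remains to identify the hyperk\"ahler structure produced by the GLT of $F$ with the given one on $M$. The GLT of $F$ yields a hyperk\"ahler manifold $M_F$ supported on the locus \eqref{F3}, which by the previous step contains $M$; Theorem \ref{one} further identifies $M_F$, as a complex manifold near $\zeta=0$, as the space of pairs $(S,\nu)$ parametrised by $\eta_j^l(0)$ and $\log f_0^l(0,\eta_j^l(0))$. Hence $M_F$ and $M$ coincide as complex manifolds with the complex structure $I$, and their twisted symplectic forms both equal $\Omega(\zeta)$ of \eqref{Omega2}. Since the hyperk\"ahler metric on a neighbourhood of $\pi^{-1}(0)$ is determined by the underlying family of real sections together with the first-order $\zeta$-expansion of the twisted symplectic form (compare \eqref{lines} and the derivation in \S\ref{twistor_lines}), this matching forces $M_F=M$ as hyperk\"ahler manifolds. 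The main obstacle is the residue/abelian-sum identification in the second step: matching the sign and normalisation conventions of the Serre-duality pairing $\Res_{\tilde 0_l}\frac{\partial H_l}{\partial\eta}\omega_{a-2,m_l-i}$ with the class of ${E_l}_{|S_l}$ across all components $S_l$ simultaneously requires careful bookkeeping. A secondary technical point is the consistent topological construction of $c$: the pairing of outgoing points $\Delta_l^+$ on $S_l$ with incoming points on neighbouring components at the intersection divisor, compatibly with $\tau_\ast c = -c$, relies essentially on the hypothesis $\Delta_l^-=\tau(\Delta_l^+)$.
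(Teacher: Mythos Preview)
Your outline has the right shape but contains a genuine gap in the construction of the cycle $c$ and the verification of \eqref{F3}. You build $c$ ``topologically'' from arcs with boundary $\Delta_l^+-\Delta_l^-$ on each $S_l$; this fixes only the boundary of the chain $\gamma_l$, not its class in $H_1(S_l,\oZ)$. Consequently, your claimed equivalence in the second step is false as stated: the vanishing of $F_{w_a^i}$ for $2\le a\le 2i-2$ is an equality of vectors in $H^0(S_l,\Omega^1)^\ast\simeq\oC^{g_l}$, whereas the hypothesis \eqref{triviality} is an equality in $\Jac^0(S_l)$, i.e.\ only modulo the period lattice. Corollary \ref{answer0} gives the implication \eqref{F3}$\Rightarrow$\eqref{triviality}; its converse, which you need, holds only for a \emph{specific} homology class of $\gamma_l$ depending on the section $\nu_l$, not just on its divisor. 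With an incorrectly chosen $c$, the GLT locus \eqref{F3} does not contain $M$, and your appeal to Theorem \ref{one} collapses.

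The paper resolves this by constructing $\gamma_l$ directly from $\nu_l$: cutting $S_l$ along a canonical basis $\{a_r,b_r\}$ and arcs $\epsilon_i$ to the support of $(\nu_l)$, one computes $\oint_{\partial P}\kappa_j\,d\zeta/\zeta^s$ and reads off the cycle $\lambda_l$ of \eqref{lambda} from the periods $\oint_{a_r}d\log f_0^l$, $\oint_{b_r}d\log f_0^l$. This yields the key identity \eqref{crux}, which simultaneously gives \eqref{F3} and the matching of \eqref{F2}, \eqref{dK} with the twistor lines. The reality argument you flag as ``secondary'' is in fact essential: choosing the basis with $\tau_\ast a_r=-a_r$, $\tau_\ast b_r=b_r$, the condition $\nu_l\overline{\tau^\ast\nu_l}=1$ forces $\oint_{a_r}d\log f_0^l=0$, so $\lambda_l$ lies in the span of the $a_r$ and is automatically $\tau$-anti-invariant. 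Without this step there is no reason the $\lambda_l$ needed to kill the period discrepancy should be compatible with $\tau_\ast c=-c$. What you identify as the ``main obstacle'' (sign and normalisation bookkeeping in Serre duality) is comparatively routine.
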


The remainder of the section is devoted to a proof of these two theorems. The basic idea comes from \cite{IR,CK,Hough}.
\par
We begin by discussing the situation on a single component $S_l$. Thus, we assume that we consider a smooth curve $C$ given by the equation
\begin{equation*} \eta^m+\sum_{j=1}^m \alpha_j(\zeta)\eta^{m-j}=0,\quad \alpha_j(\zeta)=\sum_{a=0}^{2j}w_a^j\zeta^a.\end{equation*}
 We assume that we have a meromorphic section $\nu$ of a line bundle on $C$, which is the restriction of a line bundle $E$ on $\oT$, with the transition function $\exp \frac{\partial H}{\partial \eta}$. We represent $\nu$ by a pair of meromorphic functions $f_0(\zeta,\eta)$ on $S\cap U_0$ and $f_\infty(\tilde \zeta,\tilde \eta)$ on $S\cap U_\infty$  satisfying  $f_\infty(\tilde \zeta,\tilde \eta)=\exp\frac{\partial H}{\partial \eta}f_0(\zeta,\eta)$ over $\zeta\neq 0,\infty$. Let  $ \Delta^+$ be the zero divisor of $\nu$ and $ \Delta^-$ its polar divisor. We assume that they are disjoint from the fibres of $C$ over $\zeta=0$ and $\zeta=\infty$. For the time being, we do not require any reality conditions.
\par
We consider the form \eqref{Omega2}, which we rewrite in terms of coefficients of the polynomial rather than its roots (cf. \cite{Hough}):
\begin{equation} \sum_{j=1}^{m}\frac{df_0\bigl(\zeta,\eta_j(\zeta)\bigr)}{f_0\bigl(\zeta,\eta_j(\zeta)\bigr)}\wedge d\eta_j(\zeta)= \sum_{j=1}^{m} dU_j(\zeta)\wedge d\alpha_j(\zeta),\label{Omega3}\end{equation}
where
\begin{equation} U_j(\zeta)=\sum_{i=1}^m \frac{\partial \eta_i(\zeta)}{\partial \alpha_j}\log f_0\bigl(\zeta,\eta_i(\zeta)\bigr).\label{U}\end{equation}
Note that $U_j$ is a (multi-valued) function on $\oP^1$.
We now cut the surface $C$ as in \cite[pp. 242--243]{GH}, so that $\log f_0$ and the $U_j$ become  single valued functions. Let $a_1,\dots, a_g$,$b_1,\dots,b_g$ be cycles on $C$ representing the canonical basis of $H_1(C,\oZ)$,  disjoint except the common base point $s_0\in C$ and not containing any zero or pole of $\nu$. Let $\epsilon_i$ be smooth arcs from $s_0$ to the points $\{p_i\}$ in the support of $(\nu)$, disjoint from all $a_r,b_r$ (except for $s_0$). 
We may also assume that that $a_r,b_r,\epsilon_i$  do not contain any points of the fibres over $0,\infty$. Then the complement $P$ of all these paths is a simply connected region as drawn below  (cf. \cite[p. 242]{GH}).

\medskip

\setlength{\unitlength}{0.00043333in}
\begingroup\makeatletter\ifx\SetFigFont\undefined%
\gdef\SetFigFont#1#2#3#4#5{%
  \reset@font\fontsize{#1}{#2pt}%
  \fontfamily{#3}\fontseries{#4}\fontshape{#5}%
  \selectfont}%
\fi\endgroup%
{\renewcommand{\dashlinestretch}{30}
\begin{picture}(8574,6039)(-500,-10)
\put(2462,-200){\shortstack{$s_0$}}
\put(3802,-230){\shortstack{$a_1$}}
\put(6000,102){\shortstack{$b_1$}}
\put(7882,1320){\shortstack{$a_1^{-1}$}}
\put(8632,3120){\shortstack{$b_1^{-1}$}}
\put(7692,5420){\shortstack{$a_2$}}
\put(2187,3087){\shortstack{$p_1$}}
\put(4887,3762){\shortstack{$p_i$}}
\put(4012,1362){\shortstack{$\epsilon_i$}}
\put(2056,1617){\shortstack{$\epsilon_1$}}
\put(1056,4600){\shortstack{$\Large{P}$}}

\dottedline{80}(12,4000)(12,2637)
\drawline(12,2637)(762,762)(2562,12)
	(4812,12)(7287,762)(8562,2337)
	(8562,4662)(6912,6012)
\drawline(2562,12)(2560,14)(2555,19)
	(2547,27)(2535,40)(2519,57)
	(2500,77)(2478,101)(2455,126)
	(2431,152)(2407,178)(2385,203)
	(2363,227)(2343,250)(2325,272)
	(2309,292)(2294,312)(2281,330)
	(2268,348)(2257,365)(2247,382)
	(2237,399)(2227,419)(2218,438)
	(2209,458)(2200,478)(2193,500)
	(2185,521)(2179,544)(2173,567)
	(2168,590)(2163,614)(2160,637)
	(2157,661)(2155,685)(2154,708)
	(2153,730)(2154,753)(2155,774)
	(2156,795)(2159,816)(2162,837)
	(2165,856)(2169,875)(2174,894)
	(2179,914)(2185,934)(2191,955)
	(2197,976)(2205,998)(2212,1020)
	(2220,1043)(2228,1065)(2236,1088)
	(2245,1112)(2253,1135)(2261,1157)
	(2269,1180)(2277,1203)(2285,1225)
	(2292,1247)(2299,1269)(2306,1290)
	(2312,1312)(2318,1332)(2323,1353)
	(2328,1374)(2333,1396)(2338,1418)
	(2343,1441)(2347,1464)(2351,1489)
	(2355,1514)(2358,1539)(2361,1565)
	(2364,1590)(2366,1617)(2368,1643)
	(2370,1669)(2371,1694)(2371,1720)
	(2371,1745)(2371,1770)(2370,1794)
	(2369,1818)(2367,1841)(2365,1864)
	(2362,1887)(2359,1910)(2355,1933)
	(2351,1956)(2347,1980)(2342,2004)
	(2337,2029)(2331,2054)(2325,2080)
	(2318,2105)(2312,2131)(2305,2157)
	(2298,2184)(2291,2209)(2284,2235)
	(2277,2260)(2270,2285)(2263,2310)
	(2257,2333)(2250,2356)(2245,2378)
	(2239,2400)(2234,2421)(2229,2442)
	(2225,2462)(2220,2486)(2215,2510)
	(2211,2534)(2208,2558)(2205,2584)
	(2202,2611)(2199,2640)(2197,2670)
	(2195,2703)(2193,2737)(2192,2771)
	(2191,2806)(2189,2839)(2189,2869)
	(2188,2894)(2187,2913)(2187,2926)
	(2187,2934)(2187,2937)
\drawline(2562,12)(2565,13)(2570,15)
	(2581,19)(2597,26)(2618,34)
	(2645,45)(2676,57)(2710,71)
	(2746,86)(2783,101)(2819,117)
	(2855,132)(2889,146)(2921,161)
	(2951,174)(2979,187)(3005,200)
	(3030,213)(3053,225)(3075,237)
	(3096,249)(3117,262)(3137,274)
	(3157,288)(3177,301)(3197,315)
	(3217,330)(3237,345)(3257,361)
	(3278,377)(3298,394)(3319,412)
	(3339,430)(3360,448)(3380,467)
	(3400,486)(3420,505)(3439,524)
	(3458,543)(3476,562)(3493,581)
	(3510,599)(3527,617)(3542,635)
	(3558,652)(3573,670)(3587,687)
	(3603,706)(3618,725)(3633,744)
	(3648,764)(3664,784)(3679,804)
	(3694,825)(3709,846)(3723,868)
	(3738,890)(3752,912)(3765,934)
	(3778,956)(3790,978)(3802,999)
	(3813,1020)(3823,1040)(3832,1060)
	(3840,1080)(3848,1099)(3855,1118)
	(3862,1137)(3869,1158)(3875,1179)
	(3880,1200)(3885,1222)(3890,1244)
	(3895,1267)(3899,1290)(3904,1314)
	(3908,1338)(3912,1362)(3916,1386)
	(3920,1410)(3925,1434)(3929,1457)
	(3934,1480)(3939,1502)(3944,1524)
	(3949,1545)(3955,1566)(3962,1587)
	(3969,1606)(3976,1625)(3984,1644)
	(3992,1664)(4002,1684)(4012,1704)
	(4023,1725)(4035,1746)(4048,1768)
	(4062,1790)(4076,1812)(4091,1834)
	(4106,1856)(4122,1878)(4139,1899)
	(4156,1920)(4173,1940)(4190,1960)
	(4208,1980)(4225,1999)(4243,2018)
	(4262,2037)(4279,2054)(4297,2072)
	(4316,2089)(4335,2107)(4355,2125)
	(4376,2144)(4397,2163)(4419,2183)
	(4442,2203)(4464,2223)(4487,2243)
	(4510,2264)(4533,2284)(4556,2305)
	(4578,2325)(4600,2346)(4621,2366)
	(4642,2386)(4662,2405)(4681,2425)
	(4699,2444)(4717,2462)(4733,2481)
	(4750,2499)(4766,2520)(4783,2541)
	(4798,2562)(4814,2584)(4829,2606)
	(4843,2629)(4857,2653)(4871,2677)
	(4884,2702)(4896,2727)(4907,2753)
	(4918,2779)(4927,2804)(4936,2830)
	(4944,2856)(4951,2881)(4957,2906)
	(4962,2930)(4967,2954)(4970,2978)
	(4973,3001)(4975,3024)(4976,3046)
	(4976,3068)(4976,3090)(4975,3113)
	(4974,3138)(4971,3163)(4968,3190)
	(4964,3219)(4960,3250)(4954,3282)
	(4948,3317)(4942,3353)(4934,3390)
	(4927,3427)(4919,3463)(4912,3497)
	(4906,3529)(4900,3555)(4895,3577)
	(4891,3593)(4889,3603)(4888,3609)(4887,3612)
\end{picture}

\bigskip

\bigskip

 We view $P$ as a polygon with sides $a_r,a_r^{-1},b_r,b_r^{-1},\epsilon_i,\epsilon_i^{-1}$. We choose a single-valued branch of $\log f_0$ on $P$. 
%%%%%%%%%%%%%%%%%%%%%
The  function $\kappa_j(\zeta,\eta)=\frac{\partial \eta}{\partial \alpha^j}\log f_0(\zeta,\eta) $ is a meromorphic function on $P$ and we have 
$U_j(\zeta)=\sum_{i=1}^m\kappa_j(\zeta,\eta_i)$. 
Now, for an integer $s$:
$$\oint_0 U_j\frac{d\zeta}{\zeta^{s}}=\oint_{\tilde{0}}\kappa_j\frac{d\zeta}{\zeta^{s}},$$
where $0$ is a simple cycle around $0\in \oP^1$ and $\tilde 0$ its lift to $C$.
Since the differential on the right-hand side has poles only at $\zeta=0$ and $\zeta=\infty$, we have
$$\oint_{\tilde{0}}\kappa_j\frac{d\zeta}{\zeta^{s}}=\oint_{\partial P}\kappa_j\frac{d\zeta}{\zeta^{s}}- \oint_{\tilde{\infty}}\kappa_j\frac{d\zeta}{\zeta^{s}}.$$
On the other hand, the patching formula for $\kappa_j$ is
$$\tilde\kappa_j=\zeta^{2j-2}\left(\kappa_j+\frac{\partial H}{\partial \alpha_j}\right),$$
and hence
$$\oint_{\tilde{0}}\kappa_j\frac{d\zeta}{\zeta^{s}}=\oint_{\partial P}\kappa_j\frac{d\zeta}{\zeta^{s}}- \oint_{\tilde{\infty}}\tilde\kappa_j\frac{d\tilde\zeta}{\zeta^{s+2j-4}} + \oint_{\tilde{\infty}}\frac{\partial H}{\partial \alpha_j}\frac{d\zeta}{\zeta^{s}}.$$

The integrand in the third term arises, as observed in Remark \ref{tildeH}, from a function on $\oP^1$ and, so it can be replaced thanks to the residue theorem, by the integral around $0$. Thus:
\begin{equation} \oint_0 U_j(\zeta)\frac{d\zeta}{\zeta^{s}} + \oint_\infty U_j(\tilde \zeta){\tilde\zeta}^{s+2j-4}d{\tilde\zeta}=\oint_{\partial P}\kappa_j\frac{d\zeta}{\zeta^{s}} - \oint_{\tilde{0}}\frac{\partial H}{\partial \alpha_j}\frac{d\zeta}{\zeta^{s}}.\label{first}\end{equation}

We now compute $ \oint_{\partial P}\kappa_j\frac{d\zeta}{\zeta^{s}}$. We rewrite it as $\oint_{\partial P} \log f_0\psi$, where $\psi$  is a meromorphic differential (equal to $\frac{\partial \eta}{\partial \alpha^j}\frac{d\zeta}{\zeta^{s}}$), and compute it as in \cite[p. 243]{GH}:
for points $p\in a_r$, $p^\prime\in a_r^{-1}$, identified on $C$, we have
$$\log f_0(p^\prime)=\log f_0(p)+\int_{b_{r}} d\log f_0,$$
and so
$$\int_{a_r+a_r^{-1}}\log f_0\psi=-\oint_{b_r}d\log f_0\cdot \oint_{a_r}\psi.$$
Similarly, 
$$\int_{b_{r}+b_{r}^{-1}}\log f_0\psi=\oint_{a_r}d\log f_0 \cdot\oint_{b_r}\psi.$$

For points $p\in \epsilon_i$, $p^\prime\in \epsilon_i^{-1}$ identified on $C$,
$$ \log f_0(p^\prime)-\log f_0(p)=-2\pi \sqrt{-1}\ord_{p_i}(f_0),$$
and hence
$$\int_{\epsilon_i+\epsilon_i^{-1}}\log f_0\psi=2\pi \sqrt{-1}\ord_{p_i}(f_0)\int_{s_0}^{p_i}\psi.$$
%%%%%%%%%%%%%%%%%%%%
Since the integral of $\log f_0$ over a homology cycle is an integer multiple of $2\pi \sqrt{-1}$, we have a well defined homology cycle in $H_1(C,\oZ)$, represented by
\begin{equation}\lambda=\frac{1}{2\pi \sqrt{-1}}\left( -\sum_r\left(\oint_{b_r}d\log f_0\right)a_r+\sum_r\left(\oint_{a_r}d\log f_0\right)b_r\right).\label{lambda}\end{equation}
If we define a chain $\gamma$ as 
\begin{equation}\gamma=\lambda+\sum_i \ord_{p_i}(f_0)\epsilon _i,\label{gamma}\end{equation}
 then we obtain, from the above calculations,
\begin{equation*} \frac{1}{2\pi \sqrt{-1}}\oint_0 U_j\frac{d\zeta}{\zeta^{s}}+  \frac{1}{2\pi \sqrt{-1}}\oint_\infty U_j(\tilde \zeta){\tilde\zeta}^{s+2j-4}d{\tilde\zeta}=\int_\gamma \frac{\partial \eta}{\partial \alpha_j}\frac{d\zeta}{\zeta^{s}}- \frac{1}{2\pi \sqrt{-1}}\oint_{\tilde{0}}\frac{\partial H}{\partial \alpha_j}\frac{d\zeta}{\zeta^{s}}.\label{second}\end{equation*}
%%%%%%%%%%%%%
We now define a function $\phi$ on a neighbourhood of $(C,\Delta^+,\Delta^-)$ in $$\{(S,D^+,D^-);\: S\in|\sO(2m)|,\; \text{$D^\pm$ - divisors on $S$ of the same degree as $\Delta^\pm$}\}$$
by
$$\phi(S,D^+,D^-)=\int_\gamma\eta \frac{d\zeta}{\zeta^{2}}-\frac{1}{2\pi \sqrt{-1}}\oint_{\tilde{0}}H(\zeta,\eta)\frac{d\zeta}{\zeta^{2}}.$$
Since $\frac{\partial \alpha_j}{\partial w_a^j}=\zeta^a$, we get at $(S,D^+,D^-)=(C,\Delta^+,\Delta^-)$, (setting $s=2-a$):
%%%%%%%%%%%%%%%
\begin{equation*} \frac{1}{2\pi \sqrt{-1}}\oint_0 U_j(\zeta)\zeta^{a-2}d\zeta +  \frac{1}{2\pi \sqrt{-1}}\oint_\infty U_j(\tilde \zeta){\tilde\zeta}^{2j-2-a}d{\tilde\zeta}=\frac{\partial \phi}{\partial w_a^j}-R(\Delta^+,\Delta^-),\label{third}\end{equation*}
where 
$$R(\Delta^+,\Delta^-)=\sum_{(\zeta,\eta)\in \Delta^+}\frac{\eta}{\zeta^2}\frac{\partial \zeta}{\partial w_a^j}-\sum_{(\zeta,\eta)\in \Delta^-}\frac{\eta}{\zeta^2}\frac{\partial \zeta}{\partial w_a^j}.$$
 Hence
\begin{equation} \frac{\partial \phi}{\partial w_a^j}-R(\Delta^+,\Delta^-)=\begin{cases} {\frac{d U_j(\zeta)}{d\zeta}}_{|\zeta=0}& \text{if $a=0$}\\ U_1(0)+ U_1(\infty)& \text{if $a=1$ and $j=1$}\\U_j(0) & \text{if $a=1$ and $j>1$}\\  0 & \text{if $2\leq a\leq 2j-2$}.\end{cases}\label{crux}\end{equation}
\par
We now impose reality conditions: we assume that the curve $C$ is $\tau$-invariant, the line bundle $E$ satisfies \eqref{tau^ast} and that  $\nu\overline{\tau^\ast \nu}=1$. In particular, $\tau(\Delta^-)=\Delta^+$.
\par
First of all, we can choose a canonical basis of $H_1(C,\oZ)$ for which
\begin{equation} \tau_\ast(a_r)=-a_r,\enskip\tau_\ast(b_r)=b_r,\quad r=1,\dots,g.\label{ab_real}\end{equation}
This follows (cf. \cite[p.227]{HMR}) from two facts: (1) since $\tau$ is anti-holomorphic, the intersection number of any two cycles satisfies $\#(\lambda,\mu)=-\#(\tau_\ast\lambda,\tau_\ast\mu)$, and (2) $\tau_\ast$ is diagonalisable. Thus, we take $a_r$ to be the  $(-1)$-eigenvectors and $b_r$ to be $1$-eigenvectors of $\tau_\ast$. 
\par
We now have $d\log f_0=-\overline{\tau^\ast d\log f_0}$, and, hence, 
$$ \oint_{a_r}d\log f_0=-\oint_{a_r}\overline{\tau^\ast d\log f_0}=-\oint_{\tau_\ast(a_r)}\overline{ d\log f_0}=\overline{\oint_{a_r}d\log f_0}.$$
Therefore $ \oint_{a_r}d\log f_0$ is real, but, since it is also an integer multiple of $2\pi \sqrt{-1}$, it must be equal to zero. Hence, the cycle \eqref{lambda} is, in this canonical basis, a linear combination of the $a_r$ only, and so $\tau_\ast\lambda=-\lambda$. Moreover, as  $\tau(\Delta^-)=\Delta^+$, we can replace $\sum_i \ord_{p_i}(f_0)\epsilon _i$ in \eqref{gamma} by paths going from $p_i$ to $\tau(p_i)$, so that $\tau_\ast(\gamma)=-\gamma$. 

\medskip

We now prove Theorem \ref{one}. We have the function $F$ given by \eqref{Fbundle}, and we know, from Corollary \ref{triviality} and the reality conditions, that on each $S_l$ there is a section $\nu_l$ of ${E_l}_{|S_l}\otimes\left[\Delta_l^--\Delta_l^+\right]$ with $\nu_l\overline{\tau^\ast \nu_l}=1$. From the above calculations, applied to every component $S_l$, we obtain another function $F^\prime= \sum_{l=1}^k \phi({S_l})$, which, apriori, may differ from $F$ in the choice of the cycle. Let $c^\prime$ be the cycle for $F^\prime$, i.e. $c^\prime$ is the sum of $\gamma$'s on different $S_l$. We denote the restriction of  $c^\prime$ to $S_l$ by $\gamma^\prime_l$. Computing the second term in \eqref{Fbundle}, as in Remark \ref{tildeH}, we conclude, from \eqref{F3} and \eqref{int-int} that
$$\int_{\gamma_l-\gamma^\prime_l}\Omega=0$$
for every $l$ and every holomorphic differential $\Omega$ on $S_l$. The paths components on each $S_l$ are determined by the singularities of $\nu_l$, and hence, they are the same for  ${\gamma_l}$ and for ${\gamma^\prime_l}$. Thus 
$$\int_{\lambda_l-\lambda^\prime_l}\Omega=0,$$
where $\lambda_l,\lambda^\prime_l$ are the contour components of  ${\gamma_l},{\gamma^\prime_l}$. From the above discussion, with our choice of the basis of $H_1(S_l,\oZ)$, both $\lambda_l$ and $\lambda^\prime_l$ are combination of the $a_r$ only. Therefore $\lambda_l=\lambda^\prime_l$ on every $S_l$, and, consequently, $F=F^\prime$. Theorem \ref{one} follows from \eqref{crux}, \eqref{dK}, and \eqref{lines}.

\medskip

To prove Theorem \ref{two}, we define the function $F$ in \eqref{Fbundle} as $ \sum_{l=1}^k \phi({S_l})$ , and the cycle $c$ on $S$ is the sum of $\gamma$'s on different $S_l$.  Theorem \ref{two} follows easily from \eqref{crux}, \eqref{dK}, and \eqref{lines} (all terms of the form $ R(\Delta^+,\Delta^-)$, arising from different $S_l$,  cancel).

\section{Examples}

\subsection{$SU(2)$-monopole metrics and asymptotic monopole metrics}

The moduli space of $SU(2)$-monopoles of charge $n$ is a $4n$-dimensional complete hyperk\"ahler manifold, biholomorphic to the space of rational maps $\oP^1\rightarrow \oP^1$ of degree $n$. When we vary the complex structure, the denominator of the rational map, corresponding to a given monopole, traces a curve $S\in |\sO(2n)|$. Hitchin \cite{Hit} shows that the line bundle $L^2$ on $T$ with transition function $\exp(2\eta/\zeta)$ is trivial on $S$. The monopole metric is a basic example of  Theorem \ref{two}.   Indeed, it has been shown by Ivanov and Ro\v{c}ek \cite{IR} (for $n=2$) and by Houghton \cite{Hough} (for arbitrary $n$) that the monopole metric can be constructed via the generalised Legendre transform from the function 
\begin{equation} F=-\frac{1}{2\pi i}\oint_{\tilde 0} \frac{\eta^2}{\zeta^3}d\zeta + \oint_c \frac{\eta}{\zeta^2}d\zeta, \label{Fmon}\end{equation}
on $\sS(n)$.

\medskip

It has been known since the work of Taubes \cite{Tau} that the infinity of the moduli space of (centred) monopoles corresponds to a monopole decaying to a superposition of monopoles of lower charges. Thus, for any partition $(n_1,\dots,n_k)$ of $n$, there is an asymptotic region of the monopole moduli space, where monopoles are approximately a superposition of $k$ monopoles of charges $n_1,\dots,n_k$. To understand the asymptotic dynamics, we make a guess that the metric approximates the metric given by \eqref{Fmon}, but this time defined on unions of spectral curves of degrees $n_1,\dots,n_k$. In other words, this time $F$ is defined on $\sS(n_1,\dots,n_k)$. Corollary \ref{answer0} implies that the condition \eqref{F3} is equivalent to $L^2_{|S_{l}}\simeq [\Delta_l^+-\Delta_l^-]$ for every $l$, where $\Delta_l^++\Delta^-_l$ is the divisor cut out on $S_l$ by the other curves. These are, indeed, the constraints for the asymptotic monopole metrics considered in \cite{clusters} and Theorem \ref{two} shows that the metrics produced by the GLT in this case are  those in \cite{clusters}. In fact, it was this GLT approach which first suggested what the asymptotic monopole metrics should be.

\medskip
  
We observe that Remark \ref{tildeH} applies to these asymptotic metrics as well, and the constraints \eqref{F3} are equivalent to \eqref{02} being valid on every $S_l$, $l=1,\dots,k$.

\subsection{$SU(N)$-monopole metrics} We recall the twistor description, due to Hurtubise and Murray, of the moduli space of $SU(N)$-monopoles with maximal symmetry breaking. An $SU(N)$-monopole has a magnetic charge $(m_1,\dots,m_{N-1})$ and its Higgs field at infinity is conjugate to $\sqrt{-1}\diag(\mu_1,\dots,\mu_N)$, with $\mu_1<\mu_2<\dots<\mu_N$. A generic monopole with these data  corresponds to a collection of $\tau$-invariant compact spectral curves $S_p\in |\sO(2m_p)|$, $p=1,\dots,N-1$, in generic position, along with a splitting $S_p\cap S_{p-1}=S_{p,p-1}\cup S_{p-1,p}$ into subsets of disjoint cardinality, such that $\tau(S_{p,p-1})= S_{p-1,p}$ and, over $S_p$,
$$ L^{\mu_{p+1}-\mu_p}(m_{p-1}+m_{p+1})[-S_{p,p+1}-S_{p-1,p}]\simeq \sO,$$
where $L^s$ is the line bundle defined in the previous subsection. In addition, there are vanishing and positivity conditions - see \cite[p.38]{HuMu}.
\par
The moduli space of $SU(N)$-monopoles with fixed $(m_1,\dots,m_{N-1})$ and $(\mu_1,\dots,\mu_N)$ has a natural hyperk\"ahler metric. It follows from the work of Hurtubise and Murray \cite{HuMu} that the Nahm transform induces a biholomorphism between the twistor space of this metric and the twistor space of the natural $L^2$ metric on the moduli space of solutions to Nahm's equations. Moreover, this biholomorphism commutes with the the real stucture, preserves the twistor lines and the fibres of the projections onto $\oP^1$. Therefore the Nahm transform preserves the hypercomplex structure and, whence, the Levi-Civita connection. \par
It follows now, by comparing  \cite[\S 3]{HuMu} with \cite{BielCMP}, that the metric on the moduli space of solutions to Nahm's equations can be described in terms the above spectral data, by the formula  \eqref{Omega2}, where $\bigl(\zeta,\eta^l_j(\zeta)\bigr)$, $j=1,\dots,m_l$, are the points of $S_l$ over $\zeta$, $l=1,\dots,N_1$, and $f_0^l(\zeta,\eta)$ represents a section $\sigma_l$ of 
\begin{equation} L^{\mu_{l+1}-\mu_l}(m_{l+1}-m_{l-1})[-S_{l,l+1}+S_{l-1,l}],\label{L^mu}\end{equation}
satisfying $\sigma_l\overline{\tau^\ast \sigma_l}=\frac{P_{l+1}}{P_{l-1}}$, where $P_l=P_l(\zeta,\eta)$ is the polynomial defining $S_l$. Let $\nu_l=\sigma_l/\overline{\tau^\ast \sigma_l}$, $l=1,\dots, N-1$. It satisfies $\nu_l\overline{\tau^\ast \nu_l}=1$ and it is a section of 
$$ L^{\mu_{l+1}-\mu_l}(m_{l+1}-m_{l-1})[-S_{l,l+1}+S_{l-1,l}]\otimes \left(L^{-\mu_{l+1}+\mu_l}(m_{l+1}-m_{l-1})[-S_{l+1,l}+S_{l,l-1}]\right)^\ast,$$ 
i.e. of \begin{equation}L^{2\mu_{l+1}-2\mu_l}[S_{l+1,l}+S_{l-1,l}-S_{l,l+1}-S_{l,l-1}].\label{L^2mu}\end{equation}
Moreover $\nu_l$ is represented, on $\{\zeta\neq 0\}$, by $\tilde{f}_0^l(\zeta,\eta)=\bigl(f_0^l(\zeta,\eta)\bigr)^2\frac{P_{l-1}(\zeta,\eta)}{P_{l+1}(\zeta,\eta)}$ and we compute \eqref{Omega2} (omitting $\zeta$ in $\eta^l_j(\zeta)$):
%%%%%%%%%%%%%%%
\begin{multline*}
\sum_{l=1}^{N-1}\sum_{j=1}^{m_l}\frac{d\tilde f_0^l(\zeta,\eta_j^l)}{\tilde f_0^l(\zeta,\eta_j^l)}\wedge d\eta_j^l\\= 2\sum_{l=1}^{N-1}\sum_{j=1}^{m_l}\frac{df_0^l(\zeta,\eta_j^l)}{f_0^l(\zeta,\eta_j^l)}\wedge d\eta_j^l+\sum_{l=1}^{N-1}\sum_{j=1}^{m_l} \left(\sum_{i=1}^{m_{l-1}}\frac{d\eta_i^{l-1}}{\eta_i^{l-1}-\eta_j^l}-
\sum_{i=1}^{m_{l+1}}\frac{d\eta_i^{l+1}}{\eta_i^{l+1}-\eta_j^l}\right)\wedge d\eta_j^l\\
=2\sum_{l=1}^{N-1}\sum_{j=1}^{m_l}\frac{df_0^l(\zeta,\eta_j^l)}{f_0^l(\zeta,\eta_j^l)}\wedge d\eta_j^l.
\end{multline*}
Thus, we are in the situation described in Theorem \ref{two}, and the $SU(N)$-monopole metrics arise from the GLT applied 
to the function 
\begin{equation} F=\frac{1}{2}\left(\oint_c\frac{\eta}{\zeta^2}d\zeta-\frac{1}{2\pi i}\sum_{l=1}^{N-1} (\mu_{l+1}-\mu_l)\oint_{\tilde{0}_l}\frac{\eta^2}{\zeta^3}d\zeta\right)\label{FSU(N)}\end{equation}
on $\sS(m_1,\dots,m_{N-1})$.
The cycle $c$ satisfies $\tau_\ast c=-c$ and it enters each $S_l$ at points of $S_{l,l+1}+S_{l,l-1}$ and leaves at points of 
$S_{l+1,l}+S_{l-1,l}$ ($c$ is determined by the sections $\nu_l$ as in the proof of Theorem \ref{two}).

\begin{remark} For $N=3$ and $\mu_3-\mu_2=\mu_2-\mu_1=1$, the function $F$ is just half of the corresponding to the asymptotic $SU(2)$-monopole metric. Nevertheless, the cycles, and hence the metrics are different. What happens is that the twistor space is the same in both cases, but the real sections corresponding to $SU(3)$-monopoles belong to a different connected component from  the sections corresponding to the asymptotic $SU(2)$-monopole metric. This can be seen from the corresponding Nahm flow, which has a singularity at $-1,0,1$ for the $SU(3)$-monopoles \cite{HuMu}, but is smooth on $(-2,0)$ and on $(0,2)$ for the asymptotic  $SU(2)$-monopole metric. The point is that the triviality of \eqref{L^2mu} does not imply the triviality of \eqref{L^mu}.\end{remark}

Once again, we can guess the form of the asymptotic metric. In the region, where where the monopole of type $l$, $l=1,\dots N-1$, is approximately a superposition of $k$ monopoles of charges $n_1,\dots,n_k$, the asymptotic metric is given by the GLT applied to \eqref{FSU(N)} on $\sS(m_1,\dots,m_{l-1},n_1,\dots,n_k,m_{l+1},\dots,m_{N-1})$.

\subsection{Adjoint orbits and related metrics}

It is by now well-known that adjoint orbits of complex semisimple Lie groups carry hyperk\"ahler metrics (cf. \cite{Kr}). For regular semisimple orbits, the most general construction is due to Alekseevsky and Graev \cite{AG} and to Santa-Cruz \cite{SC}, who associate a $U(k)$-invariant pseudo-hyperk\"ahler structure to any reduced spectral curve $S\in |\sO(2k)|$ (provided $S$ satisfies a reality condition). Suppose that $S$ is a curve given by \eqref{S} and the polynomial coefficients $a_i(\zeta)$ satisfy \eqref{sigma}. The twistor space is defined as 
\begin{equation} Z_S= \left\{ p\in \sO(2)\otimes \gl(k,\cx);\enskip \text{$p$ is a regular matrix}\right\} \label{Z_S} 
\end{equation}
A real section of $Z_S\rightarrow \oP^1$ is a quadratic polynomial $ 
A(\zeta)=A_0+A_1\zeta+A_2\zeta^2$, such that $A(\zeta)$ is a regular matrix for every $\zeta$ and 
which satisfies $ A_0=-A_2^\ast, \enskip A_1=A_1^\ast $. Such a real section is a twistor line if, in addition, the normal bundle of $A(\zeta)$ is the sum of $\sO(1)$'s. This last condition translates into a condition on centralisers of $A(\zeta)$ - see \cite[Theorem 4]{SC}.
The manifold $N_S$ of twistor lines is a pseudo-hyperk\"ahler manifold.
Observe, that a fibre of $Z_S$ over a  $\zeta\in \oP^1$, such that the fibre of $S$ over it consists of distinct points, is an adjoint $GL(k,\cx)$ orbit. The twisted form $\Omega$ is on such a fibre just the Kostant-Kirillov-Souriau.  Consequently, with respect to the complex structure corresponding to such a (generic) $\zeta\in \oP^1$, $N_S$ is isomorphic to an open subset of an adjoint orbit. The well-known complete hyperk\"ahler metrics of Kronheimer correspond to $S$ fully reducible, i.e. a union of rational curves.
\par
We now claim that the pseudo-hyperk\"ahler structure of $N_S$ can be obtained via the generalised Legendre transform.
We consider the space $W_S\subset \sS(1,\dots,k)$, defined by setting $S_k=S$, and apply the GLT to the function 
\begin{equation} \oint_c \frac{\eta}{\zeta^2}d\zeta\label{Forbit}\end{equation}
on $W_S$. Indeed, the results of \cite{Bie-Pidst} imply that the Kostant-Kirillov-Souriau form of regular adjoint orbits of $GL(k,\cx)$ is trivialised in coordinates given by the Gelfand-Zeitlin map (considered in \S \ref{conj}) and by the Gelfand-Zeitlin torus. Thus, the twistor space of $N_S$ can be trivialised, owing to Proposition \ref{O(2)} with $d=2$, by the curves $S_l$ and sections $\sigma_l$ of $\sO(2)[D_{l-1}-D_l]$ satisfying $\sigma_l\overline{\tau^\ast \sigma_l}=\frac{P_{l+1}}{P_{l-1}}$, where $P_l=P_l(\zeta,\eta)$ is the polynomial defining $S_l$. We now proceed as for $SU(N)$-monopoles and conclude, from Theorem \ref{two}, that the metric of $N_S$ is given by the function $\eqref{Forbit}$ on $W_S$.

\medskip

We can also consider the function \eqref{Forbit} on the full $\sS(1,\dots,k)$. We obtain a (pseudo)-hyperk\"ahler manifold $N$, from which all the $N_S$ can be produced via the twistor quotient construction, as in \S\ref{gen}. The complex symplectic structure of $N$ is that of $GL(k,\cx)\times P$, where $P\simeq \cx^k$ is a regular Slodowy slice (cf. \cite{Bie-Pidst}). The metric on $N$ is a limiting case of the metrics on moduli spaces of $SU(k+1)$-monopoles of charge $(1,\dots,k)$, if we allow $\mu_p-\mu_{p+1}\rightarrow 0$ for $p=1,\dots,k$. The metric on $N$ probably has an $SU(N)$-symmetry, just like the metrics on each $N_S$.

\section{Hyperk\"ahler metrics corresponding to $[\eta^2/\zeta^2]$}
 
Theorem \ref{one} implies  that there should be a whole hierarchy of hyperk\"ahler manifolds analogous to $SU(2)$-monopole spaces and corresponding to other $H$ in the formula \ref{Fbundle} (with $k=1$). Their twistor spaces are obtained by glueing two copies of the space of rational maps of degree $n=m_1$ as in \cite[pp. 49--50]{AH}. The real sections correspond to 
spectral curves on which the line bundle  with transition function $\exp\frac{\partial H}{\partial \eta}$ is trivial. Let us write $l(\zeta,\eta)=\frac{\partial H}{\partial \eta}$ and $E^s$ for the line bundle with the transition function $\exp sl(\zeta,\eta)$. From the description of $H^1(S,\sO_S)$,  we have
$$ l(\zeta,\eta)=\sum_{i=1}^{n-1}\frac{\eta^i}{\zeta^i}q_i(\zeta),\quad q_i(\zeta)=\sum_{r=-i+1}^{i-1} d_{r,i}\zeta^r,$$
for some complex numbers $d_{r,i}$. Moreover, $E=E^1$ is real and $H$ satisfies   \eqref{Greal} if and only if $\overline{d_{r,i}}=(-1)^rd_{-r,i}$. Now, according to the general theory \cite{AHH} the flow in the direction $E^s$ on the affine Jacobian $J^{g-1}_\text{aff}$ of line bundles of degree $g-1$ corresponds to a flow of matricial polynomials, and, hence, a hyperk\"ahler metric exists on the space of matricial flows which correspond to periodic flows. The periodicity means that the matrices should have the same behaviour at $s=1$ as at $s=0$; the latter being canonically determined by the flow on the Jacobian approaching the bundle $\sO_S(n-2)\in   J^{g-1}$. 
\par
In the simplest case, $l(\zeta,\eta)=\frac{\eta}{\zeta}$, one obtains Nahm's equations. We wish to discuss briefly the next simplest case $l(\zeta,\eta)=\frac{\eta^2}{\zeta^2}$. One obtains a flow of endomorphisms $\tilde{A}(s,\zeta)=\tilde{A}_0(s)+\tilde{A}_1(s)\zeta+\tilde{A}_2(s)\zeta^2$ of the vector space $H^0\bigl(S, E^s(n-1)\bigr)$  by the general prescription as in \cite{AHH} or \cite{Hit}.
To obtain matrices $A(s,\zeta)=A_0(s)+A_1(s)\zeta+A_2(s)\zeta^2$, one needs to choose a connection. Since we want the matrices to satisfy the Hermitian conditions 
\begin{equation} A_0(s)^\ast=-A_2(s),\enskip A_1(s)^\ast=A_1(s),\label{herm}\end{equation} we choose the connection which preserves the Hitchin metric \cite[eq. (6.1)]{Hit} on $H^0\bigl(S, E^s(n-1)\bigr)$. By analogy with \cite{Hit} one considers $\frac{\tilde{A}(s,\zeta)^2}{\zeta^2}$ and takes half of the $\zeta$- constant term together with the positive terms. One can check, as in \cite[pp. 179-181]{Hit}, that this connection 
\begin{equation} \nabla_sf=\frac{\partial f}{\partial s}+\left(\frac{1}{2}\bigl(\tilde{A}_1^2+\tilde{A}_0\tilde{A}_2+\tilde{A}_2\tilde{A}_0\bigr)+ \bigl(\tilde{A}_1\tilde{A}_2+\tilde{A}_2\tilde{A}_1\bigr)\zeta + \tilde{A}_2^2\zeta^2\right)f\label{connection}\end{equation}
preserves the metric and gives, after some manipulation, the following equations on the matrices $A_i(s)$:
\begin{eqnarray*}\frac{\partial A_0}{\partial s} & = &\frac{1}{2}[A_0,A_1^2]+\frac{1}{2}[A_0^2, A_2]\nonumber \\
\frac{\partial A_2 }{\partial s} & = &\frac{1}{2}[A_0,A_2^2]+\frac{1}{2}[A_1^2, A_2]\\
\frac{\partial A_1 }{\partial s} & = & A_0A_1A_2-A_2A_1A_0+\frac{1}{2}A_0A_2A_1-\frac{1}{2}A_1A_2A_0+\frac{1}{2}A_1A_0A_2-\frac{1}{2}A_2A_0A_1. \nonumber
\end{eqnarray*}
These equations are invariant under the real structure \eqref{herm} and, if we set $A_0=T_2+iT_3, \enskip A_1=i\sqrt{3}T_1,\enskip A_2=T_2-iT_3$, we obtain the following system of ODE's for the $n\times n$  skew-hermitian matrices  $T_1,T_2,T_3$:
\begin{eqnarray*} \frac{\partial T_1}{\partial s} & = &i\bigl(T_3T_1T_2-T_2T_1T_3+T_3T_2T_1-T_1T_2T_3+T_1T_3T_2-T_2T_3T_1\bigr)\\
\frac{\partial T_2}{\partial s} & = & \frac{3}{2}i\bigl[T_3,T_2^2-T_1^2\bigr]\\
\frac{\partial T_3}{\partial s} & = & \frac{3}{2}i\bigl[T_3^2-T_1^2,T_2\bigr].\end{eqnarray*}
We do not know whether these equations have occurred in a different context. The correct boundary conditions guaranteeing perodicity of the flow $E^s$ need investigating (along lines of \cite{Hit}). We expect that $s=0$ and $s=1$ are  regular singular points for the $T_i$, which have there an expansion $(s-a)^{-1/2}\cdot(\text{\em analytic})$, $a=0,1$. In addition, the leading term will depend on the coefficients of the curve, unlike for Nahm's equations. The metric will not be an $L^2$-metric; it has to be computed from the formula \eqref{Omega2}. We also note that for $n=1$ we obtain the metric considered in Example \ref{harm}.

\end{document}